\title{Flocking particles in a non-Newtonian shear thickening fluid}
\author{Piotr B. Mucha\footnote{{\it Email address:} {\tt p.mucha@mimuw.edu.pl}}\ \ \ \ \ Jan Peszek\footnote{{\it Email address:} {\tt j.peszek@mimuw.edu.pl}}\\
{\it\footnotesize Institute of Applied Mathematics and Mechanics,}\\
{\it\footnotesize University of Warsaw,}\\
{\it\footnotesize ul. Banacha 2, 02-097 Warsaw, Poland}\\
Milan Pokorn\' y\footnote{{\it Email address:} {\tt pokorny@karlin.mff.cuni.cz}}\\
{\it\footnotesize Charles University, Faculty of Mathematics and Physics,}\\
{\it\footnotesize Sokolovsk\' a 83, 186 75 Prague, Czech Republic}
}
\date{\today}
\renewcommand{\it}{\sl}
\newcommand{\barint}{
         \rule[.036in]{.12in}{.009in}\kern-.16in
          \displaystyle\int  }
\def\r{{\mathbb{R}}}
\begin{document}

\newtheorem{theo}{\bf Theorem}[section]
\newtheorem{coro}{\bf Corollary}[section]
\newtheorem{lem}{\bf Lemma}[section]
\newtheorem{rem}{\bf Remark}[section]
\newtheorem{defi}{\bf Definition}[section]
\newtheorem{ex}{\bf Example}[section]
\newtheorem{fact}{\bf Fact}[section]
\newtheorem{prop}{\bf Proposition}[section]
\newtheorem{prob}{\bf Problem}[section]

\makeatletter \@addtoreset{equation}{section}
\renewcommand{\theequation}{\thesection.\arabic{equation}}
\makeatother

\newcommand{\ds}{\displaystyle}
\newcommand{\ts}{\textstyle}
\newcommand{\ol}{\overline}
\newcommand{\wt}{\widetilde}
\newcommand{\ck}{{\cal K}}
\newcommand{\ve}{\varepsilon}
\newcommand{\vp}{\varphi}
\newcommand{\pa}{\partial}
\newcommand{\rp}{\mathbb{R}_+}
\newcommand{\hh}{\tilde{h}}
\newcommand{\HH}{\tilde{H}}
\newcommand{\cp}{{\rm cap}^+_M}
\newcommand{\hes}{\nablax ^{(2)}}
\newcommand{\nn}{{\cal N}}
\newcommand{\dix}{\nablax \cdot}
\newcommand{\dv}{{\rm div}_v}
\newcommand{\di}{{\rm div}_x}
\newcommand{\pxi}{\partial_{x_i}}
\newcommand{\pmi}{\partial_{m_i}}
\newcommand{\tor}{\mathbb{T}}
\newcommand{\pot}{\mathcal{v}}
\newcommand{\nablav}{\nabla_v}
\newcommand{\nablax}{\nabla_x}
\newcommand{\FCS}{F_{CS}}
\newcommand{\Fd}{F_d}

\maketitle
\begin{abstract}
\noindent
We prove existence of strong solutions to the Cucker--Smale flocking model coupled with an incompressible viscous non-Newtonian fluid, with the stress tensor of a power--law structure for $p\geq\frac{11}{5}$. The fluid part of the system admits strong solutions while the solutions to the Cucker--Smale part are weak.
The coupling is performed through a drag force on a periodic spatial domain $\tor^3$. Additionally we construct a Lyapunov functional determining large time behavior of solutions to the system. 
\end{abstract}
\noindent

%
%
%
%

\section{Introduction}

Mathematical models of self-propelled agents with non-local interactions provide a way to describe a wide range of phenomena in natural sciences: physics, biology,
but also in economics or even in robotics. The literature concentrates on  analysis of time asymptotics  
\cite{hakalaru, hamd}
, pattern formation \cite{hajekaka, top, tontu}
 and study of  models with  forces that simulate various 
natural factors \cite{car3, dua1} (deterministic case) or \cite{cuc4} (stochastic one). 
The other variations of the model include forcing particles to avoid collisions \cite{cuc2} or to aggregate under the leadership of certain individuals \cite{cuc3}. 
%

We concentrate on the  Cucker-Smale (CS) flocking model describing a collective self-driven motion of self-propelled particles with a tendency to flock. 
The system has been introduced by Cucker and Smale in \cite{cuc1} in 2007 and it initiated intensive study of the subject from the mathematical point of view.
 The vast literature on the CS model refers mostly to qualitative analysis \cite{car, top, hahaki}. Simple form of the system allows unexpectedly to find answers to questions concerning the structure of solutions like aggregation with leaders \cite{cuc3, shen}, collision avoidance \cite{ahn1, ccmp}, cluster formation \cite{ha}. The theory contains also examination of systems with various additional forces \cite{dua1, halele} and with special cases of the communication weight: singular \cite{haliu, ahn1, jpe, jps}, normalized \cite{mo} and incorporating the effect of time-delay \cite{has}. Parallely, research on the passage from the particle CS system to the kinetic equation is performed \cite{haliu,hatad,mp} (see also \cite{rec,deg1,deg2} for general theory on derivation of kinetic models from non-local particle systems). 

The present paper considers one of the other directions of research. Our subject is   motion of agents described by the kinetic CS equation 
\begin{equation} \label{X}
\partial_t f + v\cdot \nablax f+\dv (F(f)f) = 0,
\end{equation}
submerged in a non-Newtonian viscous incompressible fluid. In parallel to the analysis of the kinetic models themselves, research on coupling models of kinetic theory with models of hydrodynamics 
was conducted (see \cite{org,go2,go3}) and they are a part of large theory called complex flows.
Our motivation comes from results for the  complex flow models, here we shall mention \cite{go1,CoSe} concerning the Fokker-Planck equation coupled with the Navier-Stokes system.
The literature on the CS model coupled with models of hydrodynamics is quite rich. It includes coupled CS-Navier-Stokes system \cite{Bae}, also in the compressible case \cite{choi}, and a venture towards well-posedness with small data \cite{bae2}. 
We aim at proving global in time solvability for arbitrary large data with solutions with a regular fluid part. Note that for the classical Navier-Stokes equations we are still not able to consider general smooth solutions, hence application of the non-Newtonian concept of description of the flow allows to obtain stronger results than for the Newtonian case like in \cite{Bae} and unlike \cite{bae2} it does not require smallnes of initial data.


Our goal is to consider particles embedded in an incompressible viscous non-Newtonian shear thickening fluid, i.e. we aim to couple \eqref{X} with the system
\begin{align}\label{ns}
\left\{
\begin{array}{rcc}
\partial_t u + (u\cdot\nablax)u + \nablax\pi - \di(\tau) &=& G_{ext},\\
\di u &=& 0,
\end{array}
\right.
\end{align}
which describes the motion of such a fluid. The function
\begin{align*}
u=u(t,x)=(u_1(t,x),u_2(t,x),...,u_d(t,x))
\end{align*}
represents velocity of the fluid at the position $x$ and time $t$. Equation $(\ref{ns})_2$ expresses the conservation of mass (as well as the incompressibility constraint), while
$(\ref{ns})_1$ expresses the conservation of momentum. The term $\tau$ in $(\ref{ns})_1$ denotes a symmetric stress tensor that depends on $Du$ --- the symmetric part of the gradient of $u$,
i.e. $\tau=\tau(Du)$, where $Du=\frac{1}{2}[\nablax  u+(\nablax  u)^T]$. Function $G_{ext}$ represents an external force.

To couple \eqref{X} with (\ref{ns}) we introduce the following drag force
\begin{align*}
F_d(t,x,v):= u(t,x)-v,
\end{align*}
that influences the motion of  particles and fluid. 
 Explicitly, the coupled system reads as follows:
\begin{align}\label{sys}
\left\{
\begin{array}{rcl}
\partial_t f + v\cdot \nablax f+\dv [(\FCS(f)+\Fd)f] &=& 0,\\
\partial_t u + (u\cdot\nablax)u + \nablax\pi - \di(\tau) &=& -\int_{\r^d}\Fd fdv,\\
\di u &=& 0.
\end{array}
\right.
\end{align}
The system is considered over the phase-space $\tor^d_x \times \r^d_v$ with a set of initial data.
Our main result is presented by Theorem \ref{exiful.p2}. It says that for any given initial velocity and distribution of particles assumed to be suitably regular there exists global in time regular solution, provided the growth of
stress tensor $\tau(Du)$ is greater than $p-1$ with $p\geq \frac{11}{5}$, the same as for the pure non-Newtonian fluid \cite{mal}. In addition, we construct a Lyapunov functional which shows that the energy of the system decays in time, 
in the special case for $p>3$, in which we are able to conclude that the energy goes to zero as time goes to infinity. This type of studies of large-time behavior can be found in \cite{bae2} and in a particularly refined version in \cite{choi}.

Let us briefly discuss the difference between coupling of the CS model with Newtonian and non-Newtonian fluids. In \cite{org,Bae}, the authors obtained 
existence of weak solutions for their coupled systems and on top of that in \cite{Bae}, the authors proved asymptotic flocking (adding later in \cite{bae2} a modification of the large-time behavior part of the result in a small initial data scenario). In case of coupling with a non-Newtonian fluid, existence,
regularity and possibly uniqueness depend on the value of the exponent $p$ and regularity of the external function $G_{ext}$. For the non-Newtonian system (\ref{ns}) existence of weak 
solutions is known for $p>\frac{2d}{d+2}$ and $G_{ext}\in L^p(0,T;(W^{1,p})^*(\tor^d))$.
On the other hand, if $p\geq \frac{3d+2}{d+2}$ and 
 $G_{ext}
 \in L^2(0,T;L^2(\tor^d))$, we have not only  existence of strong solutions but also their uniqueness \cite{poko}. However for the coupled system uniqueness is a more delicate problem since $F_d$ in $\eqref{sys}_1$ forces the particles to move along trajectories influenced by $u$. It results in the need to control the $L^\infty$ norm of $\nablax u$, which in the non-Newtonian case is very difficult even with large $p$.
  
The paper is organized as follows. First we introduce the system (\ref{sys}) and formulate the main results. In Section 3, the kernel of the paper, we prove Theorem \ref{exiful.p2}. In Section 4 we deal with large-time behavior of solutions. Finally in Appendix a number of auxiliary results are presented/proved.

%
%
%
%

\section{Preliminaries}
Introduce the notation. By $W^{k,p}(\Omega)$ we denote the Sobolev space of functions with up to the $k$-th weak derivative belonging to the Lebesgue space $L^p(\Omega)$. 
Moreover, ${\mathcal D}^{'}(\Omega)$ denotes the space of distributions on $\Omega$ and $C^k(\Omega)$ ---
the space of the functions with up to the $k$-th derivative belonging to the space of continuous functions, which itself is denoted as $C(\Omega)$. 
The norm $\|\cdot\|_q$ denotes the $L^q$-norm, either over $\tor^3$ or over $\r^3\times \tor^3$, in dependence on the function which norm we have in  mind.
 In case it will be necessary to distinguish, we will use the full notation of the norm. The same holds in the case when the time variable is considered.
 We  also use 
\begin{align*}
A\stackrel{H(q)}{\leq} B
\end{align*}
to emphasize that the estimate $A\leq B$ follows by H\" older's inequality with exponent $q$. We  use a similar notation for Young's inequality replacing $H$ with $Y$.
An arbitrary generic constant is denoted by $C$; its actual value may change depending on its appearances even in the same line.

Let us specify the structure of the main system (\ref{sys}). We start with explanation for the equations on motion of non-Newtonian fluid. The sought elements are the velocity $u$ and pressure $p$
defined over the $d$-dimensional periodic box and time interval $[0,T]$. For the stress tensor $\tau:\r^{d^2}_{sym}\to\r^{d^2}_{sym}$ there exist $p\in(1,\infty)$ and positive constants $c_1 - c_5$, such that for all $\xi, \eta\in \r^{d^2}_{\rm sym}$
\begin{align}
 \tau_{ij}(\xi)\xi_{ij}&\geq c_1 (|\xi|^p + |\xi|^2), \qquad  
|\tau_{ij}(\xi)| \leq c_2(1+|\xi|)^{p-1},\label{tau2.p2}\\
(\tau_{ij}(\xi)-\tau_{ij}(\eta))(\xi-\eta)&\geq c_3(|\xi-\eta|^2 + |\xi-\eta|^p),\label{tau3.p2}\\
\frac{\partial \tau_{ij}(\eta)}{\partial \eta_{kl}}\xi_{ij}\xi_{kl} &\geq c_4 (1+|\eta|)^{p-2} |\xi|^2, \qquad  \Big|\frac{\partial \tau_{ij}(\eta)}{\partial \eta_{kl}}\Big| \leq c_5 (1+|\eta|)^{p-2}. \label{tau4.p2}
\end{align}
%
As a classical example we point out 
$ \tau(\xi)=C(1+|\xi|)^{p-2} \xi,$
keeping in mind that $\xi$ is meant as the symmetric part  of the velocity gradient,  i.e. $\xi = Du=\frac 12 (\nablax  u + (\nablax  u)^T)$.

Regarding the CS part of the system, we look for distribution function $f$ defined over the phase-space $\tor^d_x \times \r^d_v$ for $t \in [0,T]$. The function is required to be non-negative.
The equation on $f$ is coupled through the force term $F(f) = \FCS(f) + \Fd(f)$, where
\begin{equation}\label{drag}
 F_d(t,x,v):= u(t,x)-v,
\end{equation}
and 
\begin{equation}\label{FCS}
 \FCS(f)(t,x,v)=\int_{\tor^d\times\r^d} (w-v)\psi(|x-y|) f(t,y,w) dy dw,
\end{equation}
where $\psi(\cdot)$ -- the communication weights is non-negative, non-increasing and smooth, with
$\|\psi\|_{{\mathcal C}^1}\leq c_6.$ It follows 
$F_{CS}(f)(t,x,v)=a(t,x)-b(t,x)v,$
with 
\begin{align}
a(t,x)&:=\int_{\tor^3\times\r^3}\psi(|x-y|)wf(t,y,w)dydw, \label{a.p2} \\
b(t,x)&:=\int_{\tor^3\times\r^3}\psi(|x-y|)f(t,y,w)dydw. \label{b.p2}
\end{align}
System (\ref{sys}) is supplemented by initial data $u_0$ and $f_0$ for the velocity field and distribution function, respectively.

Throughout the paper we assume without a loss of generality that the total mass of $f_0$ i.e. $\int_{\tor^d\times \r^d}f_0dxdv = 1$ which due to the conservation of mass means that the total mass of the particles is always equal to $1$ and thus may dissapear in the computations.

For non-negative and integrable functions $f$ we denote:
\begin{align*}
M_\alpha f(t)&:=\int_{\tor^d\times\r^d}|v|^\alpha f(t,x,v)dxdv, & \quad 
m_\alpha f(t,x)&:=\int_{\r^d}|v|^\alpha f(t,x,v)dv,
\end{align*}
with an obvious remark that $M_0f = \|f\|_{L^1} = 1$ and that for $1\leq q\leq\infty$,
\begin{align}\label{momsupp.p2}
m_\alpha f(t,x)\leq C(R)\|f(t,x,\cdot)\|_q,
\end{align}
provided that ${\rm supp}f(t,x,\cdot)\subset B(R)$, where $B(R)$ is a ball centered at $0$ with radius $R$.
Note that
\begin{align}
\|a\|_\infty  \leq c_6M_1f,\qquad &
\|b\|_\infty  \leq c_6M_0f, \label{2.8d}
\end{align}
hence
\begin{align}\label{oszacF.p2}
|\FCS(f)(t,x,v)|\leq \|a\|_\infty + |v| \ \|b\|_\infty\leq c_6(M_1f+|v|M_0f),
\end{align}
and
$\dv \FCS(f)(t,x,v) = -db(t,x).$


\subsection{Weak formulation}\label{weak.p2}


First, let us fix the physical space dimension $d=3$. We introduce the basic  function spaces. 
\begin{align*}
L^2_{div}(\tor^3) :=&\{\omega\in L^2(\tor^3): \di\omega=0\},\\ \dot{W}^{1,p}_{div}(\tor^3) :=&\{\omega\in {\mathcal D}^{'}(\tor^3): \nablax \phi\in L^p(\tor^3),\ \di\omega=0\},\\
{W}^{1,2}_{div}(\tor^3) :=&\{\omega\in W^{1,2}(\tor^3):  \di\omega=0\},\\
{\mathcal H} :=& L^\infty(0,T;\dot{W}^{1,p}_{div}(\tor^3))\cap C([0,T];L^2_{div}(\tor^3))\cap L^2(0,T;W^{2,2}(\tor^3))\cap \\ & \cap L^\infty(0,T;W^{1,2}(\tor^3)) \cap L^p(0,T;\dot W^{1,3p}(\tor^3)),\\
{\mathcal X} :=& L^\infty((0,T)\times\tor^3\times\r^3)\cap L^\infty(0,T;L^1(\tor^3\times\r^3)).
\end{align*}
The spaces are endowed with the standard norms coming from definitions.

Next, we define  weak solutions to (\ref{sys}).

\begin{defi}\label{sol.p2}
Let $p\geq\frac{11}{5}$ and $T>0$. The couple $(f,u)$ is a weak solution of (\ref{sys}) on the time interval $[0,T)$ if and only if the following conditions are satisfied:
\begin{enumerate}
\item[(i)] $f\geq 0$, $f\in {\mathcal X}$ and $M_2 f\in L^\infty([0,T])$; 
the function $v\mapsto f(t,x,\cdot)$ is compactly supported for a.a. $t\in[0,T]$ and $x\in\tor^3$.
\item[(ii)] $u\in{\mathcal H}$ and $\partial_t u\in L^2(0,T;L^2(\tor^3))$.
\item[(iii)] For all $\phi\in C^1_{b}([0,T)\times\tor^3\times\r^3)$ such that $\phi|_{t=T}\equiv 0$, the following identity holds (the lower index ${b}$ means that the function is bounded on $\tor^3\times\r^3$)
\begin{align*}
\int_0^T\int_{\tor^3\times\r^3} f[\partial_t\phi+v\cdot\nablax\phi+F(f)\cdot\nablav \phi]dxdvdt=-\int_{\tor^3\times\r^3} f_0\phi(0,\cdot,\cdot)dxdv.
\end{align*}
\item[(iv)] For all $\varphi\in W^{1,2}(\tor^3)\cap\dot{W}^{1,p}_{div}(\tor^3)$
\begin{align*}
\int_{\tor^3}\left[\frac{\partial u}{\partial t}\cdot \varphi+ (u\cdot\nablax)u\cdot \varphi+ \tau(Du):D(\varphi)\right]dx = -\int_{\tor^3\times\r^3}(u-v)\cdot\varphi fdxdv
\end{align*}
is satisfied a.e. in [0,T] and $\lim_{t\to 0^+} u(t,\cdot) = u_0$ in $L^2(\Omega)$.
\end{enumerate}
\end{defi}

\begin{rem}\rm\label{dupa.p2}
In Definition \ref{sol.p2}, regularity of $f$ (particularly, the boundedness of $M_2f$) enable us to test in $(iv)$ with $\phi = |v|^\alpha$ for $\alpha\in[0,2]$. This observation will be useful in the large-time behavior part of the paper.
\end{rem}


%
%

\subsection{Main result}
We present the main results of the paper.

\begin{theo}\label{exiful.p2}
Let $p\geq\frac{11}{5}$ and $T>0$. Suppose that the initial data $(f_0,u_0)$ satisfy
\begin{enumerate}
\item [(i)] $0\leq f_0\in (L^1\cap L^\infty)(\tor^3\times\r^3)$, ${\rm supp}f_0(x,\cdot)\subset B(R)$ for some $R>0$ and a.a. $x\in\tor^3$, where $B(R)$ is a ball centred at $0$ with radius $R$,
\item [(ii)] $u_0\in W^{1,2}_{div}(\tor^3)$.
\end{enumerate}
Then there exists a solution of (\ref{sys}) in the sense of Definition \ref{sol.p2}.
\end{theo}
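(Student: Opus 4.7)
The plan is to combine a Schauder fixed--point argument on a regularized version of \eqref{sys} with the strong--solution theory for shear--thickening non--Newtonian fluids at $p\geq\frac{11}{5}$ (cf.\ \cite{poko,mal}), and to pass to the limit in the regularization.

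First I would regularize: for $\ve>0$ mollify the fluid velocity appearing in the drag as $u^\ve=u\ast\rho_\ve$, and if needed truncate the initial $v$--support at level $N\to\infty$. On the regularized system I define a map $\Phi:u\mapsto\tilde u$ on a ball of $C([0,T];L^2_{\rm div}(\tor^3))$ in two sub--steps. (a) With $u$ frozen, the kinetic equation is solved by the characteristics
\begin{equation*}
\dot X = V,\qquad \dot V = a(t,X)-b(t,X)V+u^\ve(t,X)-V,
\end{equation*}
which are globally well defined because $\|a\|_\infty,\|b\|_\infty$ are controlled via \eqref{2.8d} and $u^\ve$ is smooth in $x$. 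The dissipative $-V$ term together with the boundedness of $a$, $b$ and $u^\ve$ propagates a compact $v$--support of $f$, and the transport identity $f(t,X,V)=f_0(X(0),V(0))\exp(\int_0^t d(b+1)\,ds)$ yields an $L^\infty$ bound, so $f\in\mathcal X$. (b) With this $f$ I solve \eqref{ns} with source $-\int_{\r^3}(u-v)f\,dv = -(m_0 f)\,u + m_1 f$; the boundedness of $m_0 f$ and $m_1 f$ places the source in $L^2(0,T;L^2(\tor^3))$, and the theory of \cite{poko} at $p\geq\frac{11}{5}$ furnishes a unique strong solution $\tilde u\in\mathcal H$ with $\partial_t\tilde u\in L^2(0,T;L^2(\tor^3))$.

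To verify Schauder's hypotheses I would use the natural energy identity obtained by adding the $u$--test of the momentum equation and the $\tfrac12|v|^2$--test of the kinetic equation,
\begin{equation*}
\tfrac12\tfrac{d}{dt}\bigl(\|u\|_2^2+M_2 f\bigr)+\int_{\tor^3}\tau(Du){:}Du\,dx+\int_{\tor^3\times\r^3}|u-v|^2 f\,dx\,dv+\tfrac12\iint \psi(|x-y|)|v-w|^2 f\,f\,dxdvdydw = 0,
\end{equation*}
where \eqref{tau2.p2} gives $\tau(Du){:}Du\geq c_1(|Du|^p+|Du|^2)$ and the third term on the left is the Cucker--Smale dissipation. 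Together with the higher--order non--Newtonian estimates (testing against $\partial_t u$ and second--order test functions, which is precisely what closes for $p\geq\frac{11}{5}$) this yields a uniform $\mathcal H$--bound, invariance of a sufficiently large ball, and, via Aubin--Lions and the $L^2(L^2)$ bound on $\partial_t\tilde u$, relative compactness in $C([0,T];L^2_{\rm div}(\tor^3))$. Continuity of $\Phi$ follows from classical stability of the characteristics with respect to the driving velocity and from well--posedness of the strong solution of the non--Newtonian system with respect to the source. Schauder thus produces a solution of the regularized system.

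The last step is the passage $\ve\to 0$, $N\to\infty$. The uniform bounds yield weak/weak--$\ast$ limits $f_\ve\rightharpoonup f$ in $\mathcal X$ and $u_\ve\rightharpoonup u$ in $\mathcal H$, and strong $L^2$ convergence of $u_\ve$ by Aubin--Lions. Preservation of the compact $v$--support of $f_\ve$ combined with strong convergence of the characteristics allows one to pass to the limit in the linear transport and in the drag and Cucker--Smale terms, and a Di~Perna--Lions type argument identifies the limit of $f_\ve$ as the weak solution of \eqref{X}. The nonlinear stress $\tau(Du_\ve)$ is handled by Minty's monotonicity trick based on \eqref{tau3.p2}, using the improved integrability $Du_\ve\in L^p(0,T;L^{3p}(\tor^3))$ coming from $\mathcal H$. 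The main obstacle, as I see it, is the simultaneous control of (a) the uniform compact $v$--support of $f_\ve$, which demands $L^\infty_x$--control on $u_\ve$ along characteristics, and (b) the identification of $\tau(Du)$ in the limit; the threshold $p\geq\frac{11}{5}$ is precisely what renders $u_\ve$ bounded via Sobolev embedding from the $\mathcal H$--regularity and what makes the Minty argument close.
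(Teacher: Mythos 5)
Your broad strategy---regularize, Schauder fixed point on a ball of $C([0,T];L^2_{\rm div})$ using the two-substep map (kinetic, then fluid), uniform energy and strong-solution estimates, Aubin--Lions compactness, and a final limit passage---is essentially the same as the paper's, and the energy identity you write is the $\ve=0$ version of \eqref{ee}. One stylistic difference: to identify the weak limit of $\tau(Du_\ve)$ you invoke Minty's monotonicity trick via \eqref{tau3.p2}, whereas the paper exploits the strong-solution regularity $u_\ve\in L^2(0,T;W^{2,2})$, from which Aubin--Lions yields $\nabla u_\ve\to\nabla u$ strongly in $L^2(L^2)$, hence a.e.\ up to a subsequence, and then Vitali (using the $L^{p'}$ bound from \eqref{tau2.p2}) closes the identification. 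Both routes work; the a.e.+Vitali one is available here precisely because solutions are strong, and it is a bit cheaper than setting up Minty.

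There is, however, a genuine gap in the limit passage. The delicate step is the $\ve$-\emph{uniform} bound of the coupling force $G_\ve=\int_{\r^3}(\theta_\ve*u_\ve-v)\gamma_\ve f_\ve\,dv$ in $L^2(0,T;L^2)$, which is what makes the strong-solution estimate \eqref{hen} $\ve$-independent. You assert this follows from "the boundedness of $m_0 f$ and $m_1 f$," but $m_1 f_\ve$ is controlled only through the $v$-support radius $\mathcal R_\ve$, which by the characteristic ODE depends on $\int_0^T\|u_\ve\|_\infty\,dt$, which in turn is controlled through the $L^p(0,T;W^{1,3p})$ norm of $u_\ve$---precisely the output of \eqref{hen} with input $G_\ve$. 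This circle is closed in the paper (Proposition~\ref{momprop.p2}, chain \eqref{push1}--\eqref{push7}) by a Gagliardo--Nirenberg interpolation for $\|u_\ve\|_\infty$ and a Young absorption of the $\|\nabla u_\ve\|_{L^p L^{3p}}^p$ contribution with small coefficient; moreover the underlying energy estimate for the regularized system carries a mollification-mismatch remainder that is handled by a nonlinear Gronwall giving bounds only on a possibly short $[0,T^*]$, so a separate continuation step (Section~\ref{glob}), relying on the loss-free limiting inequality \eqref{ee}, is needed to reach $T^*=T$. Finally, your explanation of why $p\geq\tfrac{11}{5}$ matters is off: the Sobolev embeddings of $W^{1,3p}(\tor^3)$ or $W^{2,2}(\tor^3)$ into $L^\infty$ hold for much smaller $p$; the threshold $\tfrac{11}{5}$ is the strong-solution threshold from \cite{mal,poko} (absorbing the cubic convective term) and is what lets the above Young absorption close.
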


\begin{rem}[Energy inequality and conservation of momentum]\rm
Solutions to \eqref{sys} satisfy the following energy estimate:
\begin{align}\label{ee}
\big(M_2f &+ \|u\|_2^2\big)(t) + c_1\kappa\int_0^t\|\nablax  u\|_p^p ds + \int_0^t\int_{\tor^3\times\r^3} |u - v|^2f dxdvds 
\leq \big(M_2f + \|u\|_2^2\big)(0).
\end{align}
To see it on the formal level, one needs to add two instances of $\eqref{sys}_2$ tested with $u$ to $\eqref{sys}_1$ tested with $|v|^2$ and use \eqref{tau2.p2}. Estimate \eqref{ee} is a crucial part of our considerations 
and is rigorously proved in Sections \ref{33} and \ref{glob}.
Moreover, \eqref{sys} conserves the momentum:
\begin{align}\label{cm}
\frac{d}{dt}\big(\int_{\tor^3}udx + M_1f\big) = 0.
\end{align}
Indeed, integrating $\eqref{sys}_2$ over $\tor^3$, by integration by parts and thanks to $\eqref{sys}_3$ we have
\begin{align}\label{cm1}
\frac{d}{dt}\int_{\tor^3}udx = -\int_{\tor^3\times\r^3}(u-v)fdxdv.
\end{align}
On the other hand testing $\eqref{sys}_1$ with $v$ reveals that
\begin{align*}\label{cm2}
\frac{d}{dt}M_1 f = \int_{\tor^3\times\r^3}(u-v)fdxdv.
\end{align*}
Here we use the fact that
\begin{align}
\int_{\tor^3\times\r^3}\FCS(f)fdxdv = 0,
\end{align}
which is easy to see by Fubini's Theorem thanks to the anti-symmetry of $\psi(|x-y|)(w-v)$ with respect to change of variables $(x,v)$ with $(y,w)$. Adding \eqref{cm1} to \eqref{cm2} leads to \eqref{cm}.
\end{rem}

\begin{rem}\rm\label{uw.p2}
Assumption (i) in the above theorem immediately implies that
$M_\alpha f_0\leq C$ 
for some positive constant $C$ and all $\alpha\geq 0$ (from the point of view of Definition \ref{sol.p2} we need at least $M_2f_0\leq C$). In fact we could replace the boundedness of the support of $f_0$ by the assumption that
$M_5 f_0\leq C$. Then instead of working with local second apriori estimate in Section \ref{33}, it is possible to put the weight of the proof onto estimating $\frac{d}{dt}M_\alpha f$ for $\alpha\in[2,5]$ to obtain local existence. Then global existence follows from the first apriori estimate similarly to what we do in Section \ref{glob}. This approach is viable but seems more involved.
\end{rem}

The second result concerns the time-asymptotic behavior of solutions to \eqref{sys}. We express the asymptotics in the language introduced in \cite{bae2}, where the authors introduced the functional ${\mathcal E}$ that
measures the deviation of the velocity of the fluid and the velocity of the particles from their average velocities. The functional is defined as follows
\begin{align}
{\mathcal E}(t) = 2{\mathcal E}_p(t) + 2{\mathcal E}_f(t) + {\mathcal E}_d(t), \quad {\mathcal E}_p(t) = \int_{\tor^3\times\r^3}|v-v_c(t)|^2fdxdv,\label{e}\\
 {\mathcal E}_f(t) = \int_{\tor^3}|u-u_c(t)|^2dx, \quad {\mathcal E}_d(t) = |u_c(t)-v_c(t)|^2,\nonumber
\end{align}
where
\begin{align*}
u_c(t) = \int_{\tor^3}udx, \quad \mbox{and}\quad v_c(t) = \frac{\int_{\tor^3\times\r^3}vfdxdv}{\int_{\tor^3\times\r^3}fdxdv} = M_1 f.
\end{align*}

\begin{theo}\label{t-a} Suppose that $T, f_0, u_0$ satisfy
\begin{align*}
T\in(0,\infty), \quad {\mathcal E}(0)<\infty.
\end{align*}
Then the solution to \eqref{sys} in the sense of Definition \ref{sol.p2} satisfies the following exponential estimate:
\begin{align}\label{exp}
{\mathcal E}(t)\leq {\mathcal E}(0)e^{-\gamma t}, \quad t\in[0,T),
\end{align}
where $\gamma:=\min\{2\psi(\sqrt{2})+\frac{2\eta}{1+\eta}, c_1\kappa\varpi, \frac{4\eta}{1+\eta}\}$ and $\eta$ (a positive constant depending on $T$) is equal to $\eta:= \frac{c_1\kappa\varpi}{2\sup_{t\leq T}\|m_0f\|_\infty}$. 
Here $\kappa$ is the constant from Korn's inequality and $\varpi$ is the constant from Poincare's inequality for the torus $\tor^3$.

Moreover, if $p>3$ then \eqref{exp} holds for ${\mathcal E}(t) \to 0$ as $t\to \infty$.
\end{theo}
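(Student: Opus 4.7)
The strategy is to treat $\mathcal{E}(t)$ as a Lyapunov functional and derive the differential inequality $\frac{d}{dt}\mathcal{E}(t) \leq -\gamma\mathcal{E}(t)$, from which \eqref{exp} follows by Gr\"onwall. I would compute each piece by testing the relevant equation: for $\mathcal{E}_p$, test the Vlasov equation weakly with $\phi = |v - v_c(t)|^2$ (admissible via Remark~\ref{dupa.p2} after expanding in powers of $|v|$); for $\mathcal{E}_f$, test the momentum equation with $2(u - u_c(t))$; for $\mathcal{E}_d$, use \eqref{cm1} and its parallel $\dot v_c = \int(u-v)f\,dxdv$. The $\dot u_c, \dot v_c$ contributions vanish by $\int(u-u_c)\,dx = 0$ and $\int(v-v_c)f = 0$; divergence-freeness kills pressure and convection; the stress term via \eqref{tau2.p2}, Korn and Poincar\'e contributes $-2c_1\kappa\varpi\mathcal{E}_f$; and the $F_{CS}$ part, after the standard $(x,v)\leftrightarrow(y,w)$ symmetrization, contributes $-\iint\psi(|x-y|)|v-w|^2 f(x,v)f(y,w)\,dxdvdydw \leq -2\psi(\sqrt 2)\mathcal{E}_p$ using $|x-y| \leq \sqrt 2$ and monotonicity of $\psi$.

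Decomposing $u-v = (u-u_c) + (u_c - v_c) + (v_c - v)$, the drag contributions in the weighted combination $2\frac{d\mathcal{E}_p}{dt}+2\frac{d\mathcal{E}_f}{dt}+\frac{d\mathcal{E}_d}{dt}$ produce the diagonal dissipations $-4\mathcal{E}_p$, $-4\int(u-u_c)^2 m_0 f\,dx$, $-4\mathcal{E}_d$, together with exactly two cross terms
\begin{align*}
T_1 := 8\iint(u-u_c)(v-v_c)f\,dxdv,\qquad T_2 := -8(u_c-v_c)\int(u-u_c)m_0 f\,dx.
\end{align*}
By Cauchy--Schwarz, $|T_1| \leq 8\bigl(\int(u-u_c)^2 m_0 f\,dx\bigr)^{1/2}\mathcal{E}_p^{1/2}$, and two bounds hold for $T_2$: $|T_2| \leq 8\mathcal{E}_d^{1/2}\bigl(\int(u-u_c)^2 m_0 f\,dx\bigr)^{1/2}$ (from $\|m_0 f\|_1 = 1$) and $|T_2| \leq 8(\mathcal{E}_d\|m_0 f\|_\infty\mathcal{E}_f)^{1/2}$. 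Since no single bound on $T_2$ closes the argument, I would split $T_2 = \theta T_2 + (1-\theta)T_2$ and apply Young's inequality to each piece, tuning $\theta$ and the Young parameters so that one piece fills what $T_1$ leaves of the $\int(u-u_c)^2 m_0 f$-dissipation, while the other consumes at most half of $-4c_1\kappa\varpi\mathcal{E}_f$. The specific $\eta = c_1\kappa\varpi/(2\sup_{t\leq T}\|m_0 f\|_\infty)$ is precisely the threshold at which this latter condition becomes tight; after the dust settles, the coefficients of $\mathcal{E}_p, \mathcal{E}_f, \mathcal{E}_d$ reduce to $-(4\psi(\sqrt 2)+\tfrac{4\eta}{1+\eta})$, $-2c_1\kappa\varpi$, $-\tfrac{4\eta}{1+\eta}$, giving $\frac{d}{dt}\mathcal{E} \leq -\gamma\mathcal{E}$, and Gr\"onwall yields \eqref{exp}.

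The main obstacle is the Young balancing just described: $T_2$ resists absorption by any single dissipation, and the splitting must respect the perfect-square identity $-4\mathcal{E}_p - 4\int(u-u_c)^2 m_0 f\,dx + T_1 = -4\int|(v-v_c)-(u-u_c)|^2 f\,dxdv$ that links the $\mathcal{E}_p$- and $\bigl(\int(u-u_c)^2 m_0 f\bigr)$-dissipations with $T_1$. For the final assertion in the regime $p > 3$, the difficulty is that $\eta$ depends on $T$ through $\sup_{t\leq T}\|m_0 f\|_\infty$, so a priori $\gamma \to 0$ as $T \to \infty$. When $p > 3$, Sobolev embeds $W^{1,p}(\tor^3) \hookrightarrow L^\infty$, and together with the global energy bound \eqref{ee} this yields a time-uniform control of $\|u\|_\infty$; propagating along the (now Lipschitz) Vlasov characteristics gives $\sup_{t\geq 0}\|m_0 f\|_\infty < \infty$. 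Thus $\eta$ remains bounded below uniformly and the exponential decay persists as $t \to \infty$.
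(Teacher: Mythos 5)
Your treatment of the finite-$T$ exponential estimate mirrors the paper's: the same weighted sum $2\dot{\mathcal E}_p+2\dot{\mathcal E}_f+\dot{\mathcal E}_d$ built from the three one-sided inequalities of Lemma \ref{t-a-l}, the same decomposition $u-v=(u-u_c)+(u_c-v_c)+(v_c-v)$ inside $-4\int|u-v|^2f$, and the same Young-type absorption into $\mathcal E_f$ through $\int|u-u_c|^2 m_0 f\,dx\le\|m_0f\|_\infty\mathcal E_f$ with the threshold $\eta=c_1\kappa\varpi/(2\sup_{t\le T}\|m_0 f\|_\infty)$. The paper bundles your $T_1$ and $T_2$ into a single cross term $2\int(u-u_c)\cdot(u_c-v)f$ and uses one Young parameter $\delta=1+\eta$ rather than a $\theta$-split, but the arithmetic is equivalent and the resulting $\gamma$ is the same. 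That part is fine.

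The $p>3$ claim is where the proposal breaks down. You argue that $W^{1,p}\hookrightarrow L^\infty$ plus the global energy bound gives ``time-uniform control of $\|u\|_\infty$,'' and from there $\sup_{t\ge 0}\|m_0f\|_\infty<\infty$ by propagating along characteristics. Neither step survives scrutiny. The energy estimate \eqref{ee} yields $\int_0^\infty\|\nablax u\|_p^p\,dt<\infty$ and $\sup_t\|u\|_2<\infty$, hence $\|u\|_\infty\in L^p(0,\infty)+L^\infty(0,\infty)$, not $\|u\|_\infty\in L^\infty(0,\infty)$. Even granting, with more care, that the friction $-v$ in $\dot v = a+u-(b+1)v$ and the exponential kernel $e^{-(t-s)}$ tame the $v$-support uniformly in $t$, the sup-norm of $m_0f$ is still not controlled: along characteristics $f(t,x(t),v(t))=e^{3\int_0^t(b+1)\,ds}f_0(x_0,v_0)$ with $b\ge 0$, so $\|f(t)\|_\infty$ grows at least like $e^{3t}$, and $m_0f\le C\,\mathcal R(t)^3\|f(t)\|_\infty$ gives you nothing uniform. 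Without $\sup_{t\ge 0}\|m_0f\|_\infty<\infty$, your $\eta$ and hence $\gamma$ degenerate as $T\to\infty$, and the exponential rate does not propagate to $t\to\infty$.

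The paper sidesteps this entirely. It never claims a uniform decay rate for $p>3$; it only proves $\mathcal E(t)\to 0$. The argument is: from \eqref{ee}, $\int_0^\infty\bigl(\|\nablax u\|_p^p+\int|u-v|^2f\bigr)\,dt<\infty$, so there is a sequence $t_n\to\infty$ along which both integrands vanish; Sobolev ($p>3$) turns $\|\nablax u(t_n)\|_p\to 0$ into $\|u(t_n)-u_c(t_n)\|_\infty\to 0$, giving $\mathcal E_f(t_n)\to 0$; then $\int|u_c(t_n)-v|^2f(t_n)\,dxdv\to 0$, and expanding this with the conservation-of-momentum identity \eqref{cm} forces $u_c(t_n)\to 0$ and $M_2f(t_n)\to 0$, hence $\mathcal E_d(t_n),\mathcal E_p(t_n)\to 0$; finally, since \eqref{siic1} shows $\mathcal E$ is nonincreasing, $\mathcal E(t_n)\to 0$ upgrades to $\mathcal E(t)\to 0$ as $t\to\infty$. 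You should replace your $p>3$ paragraph with this subsequence-plus-monotonicity argument, or else supply a genuine proof of $\sup_{t\ge 0}\|m_0f\|_\infty<\infty$, which at present you do not have.
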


To better understand the meaning of Theorem \ref{t-a} we shall look at the ${\mathcal E}(t)$ as a Lyapunov functional. Decay (\ref{exp}) shows that $\mathcal{E}(t)$ decreases in time, although it seems that the result is local, since
the supremum norm of $m_0f$ may increase in time (but for all $T$ it is well defined). As $p$ is greater than the dimension thanks to Sobolev imbeddings we are able to show that, indeed, the energy described by $\mathcal{E}(t)$ vanishes to zero
as time goes to infinity.


%
%
%
%

\section{Existence of solutions}

Our first goal is to prove   Theorem \ref{exiful.p2}. The idea of proving existence is based on analysis of an approximative system and suitable application of Schauder theorem.

\subsection{Regularized system}\label{secregul.p2}
Note first there is a need to control the support of $f$ in $\nu$ in
the external force 
$G_{ext}(t,x)=\int_{\r^3}(u(t,x)-v)f(t,x,v)dv$. 
We introduce a cut-off function $\gamma_\epsilon:\r^3\to\r$, such that  the support in $v$ of $f$ is contained in  a ball of the radius $\frac{1}{\epsilon}$. Then we define
\begin{align*}
G_{\epsilon}(t,x)=\int_{\r^3}(\theta_\epsilon * u(t,x)-v)\gamma_\epsilon(v) f(t,x,v)dv,
\end{align*} 
where $\gamma_\epsilon\in C^\infty(\r^3)$,
${\rm supp}\,\gamma_\epsilon\subset B({1}/{\epsilon}),$ $0\leq\gamma_\epsilon\leq 1,$ $\gamma_\epsilon = 1\ {\rm on}\ B({1}/{2\epsilon}),$  $\gamma_\epsilon\to 1\ {\rm as}\ \epsilon\to 0^+,$
and $\theta_\epsilon$ is the standard mollifier i.e.
$\theta_\epsilon(x):=\frac{1}{\epsilon^3}\theta\left(\frac{x}{\epsilon}\right),$
for some $0\leq\theta\in C^\infty_0(\tor^3)$ with $\int_{\tor^3}\theta dx=1$. We further regularize also the drag force in the CS equation. For $\epsilon >0$ we denote the regularized force  $F_\epsilon(f_\epsilon)$, where 
\begin{align*}
F_\epsilon(f_\epsilon;u):= F_{CS}(f)+ (\theta_\epsilon * u - v)\gamma_\epsilon.
\end{align*}
We now write down the regularized system. For $\epsilon >0$ we consider
\begin{align}\label{regul.p2}
\left\{
\begin{array}{rcl}
\partial_t f_{\epsilon} + v\cdot \nablax f_{\epsilon}+\dv [F_\epsilon(f_\epsilon;u_\epsilon)f_\epsilon] &=& 0,\\
\partial_t u_{\epsilon} + (u_{\epsilon}\cdot\nablax)u_{\epsilon} + \nablax\pi_{\epsilon} - \di(\tau(Du_{\epsilon})) &=& -\int_{\r^d}f_{\epsilon}(\theta_\epsilon * u_{\epsilon}-v)\gamma_\epsilon dv,\\
\di u_{\epsilon} &=& 0,
\end{array}
\right.
\end{align}
with a smooth, compactly supported (in the variable $\nu$ in $B(R)$) initial data $f_{0,\epsilon}$, where $0\leq f_{0,\epsilon}\to f_0$ strongly in $L^p(\tor^3\times\r^3)$ for all $p\geq 1$ and weakly * in $L^\infty(\tor^3\times\r^3)$,  $u_{0,\epsilon}= u_0$.

To solve the regularized problem (\ref{regul.p2}) we  apply the following {\bf Schauder type scheme}.
Given $\epsilon >0$ and for suitably chosen $T$ (defined in Proposition \ref{estimun.p2}) we define a set
\begin{equation}\label{defV}
 V =\{ u \in L^\infty(0,T;L^2(\tor^3)): \di u=0 \mbox{ \ and \ } \|u\|_{L^\infty(0,T;L^2(\tor^3))} \leq \underline{C}\},
\end{equation}
where $\underline{C}$ is a chosen constant greater than $\|u_0\|_2$. 

We take a function $\bar u \in V$. 

We define 
\begin{align*}
\bar F_d=(\theta_\epsilon*
\bar u-v)\gamma_\epsilon,
\end{align*}
which is at this point a given function. Next we solve the Vlasov-type equation:
\begin{align}\label{pomocf.p2}
\partial_t f+v\cdot\nablax f+\dv[(F_{CS}(f)+ \bar F_d)f]=0,
\end{align} 
with initial datum $f(0,x,v)=f_{0,\epsilon}(x,v)$.

Next, we define $u$ as the solution of the system
\begin{align}\label{pomocu.p2}
\left\{
\begin{array}{rcl}
\partial_t u+(u\cdot\nablax)u +\nablax\pi -\di \tau(u) &=& \bar G=-\int_{\r^d}f(\theta_\epsilon *\bar u-v)\gamma_\epsilon dv,\\
\di u
 &=& 0,
\end{array}
\right.
\end{align}
with the initial datum
$u(0,x)=u_{0}(x)$
noting that in this system, the right-hand side depends on $f$ and $\bar u$, which are at this point given functions. Thus, in fact,  we solve (\ref{ns}) with a given external force. 


Existence of $f$ and $u$ is guaranteed by the following propositions belonging to the classical theory.

\begin{prop}\label{pomoc2.p2}
Let $T>0$. There exists a solution in the sense of Definition \ref{sol.p2} to the problem
\begin{align}\label{pro.p2}
\partial_t f + v\cdot\nablax f + \dv[(F_{CS}(f)+(\theta_\epsilon*u-v))\gamma_\epsilon(v)f] = 0,
\end{align}
as long as $0\leq f_0\in C^\infty(\tor^3\times\r^3)$ is compactly supported in $v$ and $u\in L^\infty(0,T;L^2_{div}(\tor^3))$. This solution $f$ belongs to the space $C^2([0,T]\times\tor^3\times\r^3)$. Moreover,
\begin{align}
\|f\|_{L^\infty(0,T;(L^\infty\cap L^1)(\tor^3\times\r^3))} \leq C,\qquad & \|f\|_{C^1} \leq C(\epsilon),\label{finfty.p2}
\end{align}
where $C$ is a positive constant depending on $\|f_0\|_{L^1(\tor^3\times\r^3)}$ and $\|f_0\|_{L^\infty(\tor^3\times\r^3)}$, while $C(\epsilon)$ depends also 
on $\epsilon$ and $\|u\|_{L^\infty(0,T;L^2(\tor^3))}$ (both constants depend also on $T$). Furthermore, $f\geq 0$ in $[0,T]\times\tor^3\times\r^3$, provided $f_0\geq 0$ in $\tor^3\times\r^3$. 
\end{prop}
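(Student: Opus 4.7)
The plan is to construct $f$ by a fixed-point argument combined with the method of characteristics, exploiting the fact that the regularization has removed all serious pathologies: $\theta_\ve*u$ is smooth in $x$ (with bounds $\|\theta_\ve*u\|_{C^k}\le C(\ve)\|u\|_{L^\infty(0,T;L^2)}$), the multiplier $\gamma_\ve$ truncates the drag to $v\in B(1/\ve)$, and the communication weight $\psi$ is smooth. This turns \eqref{pro.p2} into a Vlasov-type equation with a smooth mean-field term $\FCS(f)$ and an otherwise smooth, globally Lipschitz drift.

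First I would decouple the self-consistent term. Given auxiliary fields $(\bar a,\bar b)\in C_b([0,T^\ast]\times\tor^3)^2$ playing the role of \eqref{a.p2}--\eqref{b.p2}, solve the linear transport equation
\begin{align*}
\partial_t f + v\cdot\nablax f + \dv\bigl\{\bigl[\bar a(t,x)-\bar b(t,x)v+(\theta_\ve*u-v)\gamma_\ve(v)\bigr]f\bigr\}=0,\qquad f|_{t=0}=f_{0,\ve},
\end{align*}
by the method of characteristics. The vector field in $(x,v)$ is globally Lipschitz uniformly in $t$, so the characteristic ODE admits a unique $C^1$ flow; $f$ is defined along characteristics by the standard exponential formula, and is automatically nonnegative and classical. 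Grönwall applied to the characteristic for $V$ yields a uniform bound on the $v$-support (depending on $\ve$, $R$, $T$, $\|\bar a\|_\infty$, $\|\bar b\|_\infty$, $\|u\|_{L^\infty L^2}$) since either the particle sits in $B(1/\ve)$ where the drift is bounded, or lies outside where $\gamma_\ve=0$ and only the linear drift $\bar a-\bar b v$ remains. Conservation of $L^1$-mass follows by integration; the $L^\infty$ bound follows from the exponential formula plus a pointwise bound on $\dv F_\ve$ which reads $-3\bar b(t,x)-3\gamma_\ve-(\theta_\ve*u-v)\cdot\nablav\gamma_\ve$.

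Next I would close the loop by a fixed point. Define $\Phi:(\bar a,\bar b)\mapsto(a,b)$ by plugging the resulting $f$ into \eqref{a.p2}--\eqref{b.p2}. Using the smoothness of $\psi$, the bound on the $v$-support, the $L^\infty\cap L^1$ bounds on $f$, and the Lipschitz dependence of the characteristic flow on $(\bar a,\bar b)$, the map $\Phi$ is a contraction on $C_b([0,T^\ast]\times\tor^3)^2$ for $T^\ast$ small enough. Its fixed point is a solution of \eqref{pro.p2} on $[0,T^\ast]$. Since the $L^1$ mass is conserved and the $L^\infty$ bound only depends on $T$ and the supremum of $\dv F_\ve$ which is already controlled, the argument iterates up to any finite $T$ without loss.

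Finally, for the $C^2$ regularity I would bootstrap: once $f\in L^\infty$ is known, both $a$ and $b$ are $C^\infty$ in $x$ thanks to the convolution with $\psi$, and $\theta_\ve*u$ is $C^\infty$ in $x$. Differentiating the characteristic ODE system with respect to the initial data then yields classical estimates on $\nablax X$, $\nablav X$, $\nablax V$, $\nablav V$ and their second-order analogues, from which $\|f\|_{C^2}\le C(\ve)$ follows via the representation formula and the $C^\infty$-smoothness of $f_{0,\ve}$. The main obstacle is really the fixed-point step: one must track the precise dependence of the flow on the auxiliary fields $(\bar a,\bar b)$ in order to make $\Phi$ a contraction; all remaining estimates are essentially standard, and the $\ve$-dependent constants in \eqref{finfty.p2} arise from the use of $\|\theta_\ve\|_{C^k}$ in controlling $\theta_\ve*u$.
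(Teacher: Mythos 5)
Your proposal follows essentially the same route as the paper, which itself only sketches the argument and defers to \cite[Appendix A]{Bae}: linearize the mean-field term, run the method of characteristics, close with a fixed point, and then extend globally via the $L^1$ and $L^\infty$ bounds, with $C^2$ regularity by differentiating the flow. One small technicality worth fixing: the fixed-point space $C_b([0,T^*]\times\tor^3)^2$ for $(\bar a,\bar b)$ is too large to define the characteristic flow, since the drift then need not be Lipschitz in $x$; one should instead iterate on $f$ directly in $L^\infty$, or restrict $(\bar a,\bar b)$ to a class with a uniform $x$-Lipschitz (or $C^1_x$) bound, which is harmless here because the true $a,b$ are smoothings of $f$ by $\psi$. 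With that adjustment the argument is sound and matches the paper's intent.
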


\begin{proof}
This proposition along with its proof can be found in \cite[Appendix A]{Bae}. It is based on the fact that both $F_{CS}(f)$ and $F_d$ in (\ref{pro.p2}) are smooth. 
Local existence in Proposition \ref{pomoc2.p2} is showed by a standard method of characteristics combined with a fixed point argument. Then to conclude the global existence, 
a priori $C^1$ estimate for $f$ is derived. It can be done because the nonlinearity in \eqref{pro.p2} that comes from the multiplication by $F_{CS}(f) + F_d$ is smooth (here regularity of 
the communication weight $\psi$ and the mollifier $\theta_\epsilon$ play the crucial role). 
\end{proof}

\begin{prop}\label{pomoc1.p2}
Let $p\geq \frac{11}{5}$ and $T>0$. There exists a unique solution in the sense of Definition \ref{sol.p2} to the problem
\begin{align*}
\partial_t u + (u\cdot\nablax)u + \nablax\pi -\di (\tau(Du)) = G,
\end{align*}
provided $u_0\in W^{1,2}_{0,div}(\tor^3)$ and $G \in L^2(0,T;L^2(\tor^3))$. Moreover,
\begin{align*}
\|u\|_{\mathcal H} \leq C,& \qquad 
\|\partial_t u\|_{L^2(0,T;L^2(\tor^3))} \leq C,
\end{align*}
where $C$ is a positive constant depending on $\|u_0\|_{W^{1,2}(\tor^3)}$, $\|G\|_{L^2(0,T;L^2(\tor^3))}$, $p$ and $T$.
\end{prop}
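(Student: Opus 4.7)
The plan is a standard Galerkin approximation in the eigenbasis $\{\omega_k\}$ of the Stokes operator on $\tor^3$. Writing $u_n=\sum_{k=1}^n c_k^n(t)\omega_k$ and testing the equation against each $\omega_k$, one obtains a finite-dimensional Carathéodory ODE solvable locally and extended globally by the first a priori estimate below. To pass to the limit $n\to\infty$ the Aubin--Lions lemma handles the convective term, while Minty's monotonicity trick, based on assumption \eqref{tau3.p2}, identifies the weak limit of $\tau(Du_n)$. Uniqueness then follows by subtracting two solutions, using \eqref{tau3.p2} on the stress difference and Gronwall after controlling the convective difference via the $W^{2,2}$ regularity of one of the two solutions.

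The first a priori estimate is obtained by testing with $u_n$: the convective term vanishes since $\di u_n=0$, assumption \eqref{tau2.p2} yields the dissipation $c_1(\|Du_n\|_p^p+\|Du_n\|_2^2)$, and a Young plus Gronwall argument on $(G,u_n)$ produces a uniform bound in $L^\infty(0,T;L^2_{div})\cap L^p(0,T;W^{1,p})\cap L^2(0,T;W^{1,2})$. The decisive second estimate comes from testing with $-P_n\Delta u_n$, where $P_n$ is the Galerkin projection: by \eqref{tau4.p2} the stress term dominates $c_4\int_{\tor^3}(1+|Du_n|)^{p-2}|\nabla Du_n|^2\,dx$, which via Korn-type inequalities controls $\|u_n\|_{W^{2,2}}^2$ and additionally yields the non-Newtonian gain $u_n\in L^p(0,T;\dot W^{1,3p})$ (the embedding $W^{1,3p}\hookrightarrow L^\infty$ in $\tor^3$ is crucial here since $3p>3$). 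The convective term $|\int(u_n\cdot\nabla)u_n\cdot\Delta u_n\,dx|$ is bounded by Hölder and Gagliardo--Nirenberg, interpolating $\|u_n\|_\infty$ and $\|\nabla u_n\|_2$ between the first-estimate norms and $\|u_n\|_{W^{2,2}}$; Young's inequality allows absorption into the dissipation precisely when $p\geq 11/5$, matching the pure-fluid threshold of \cite{mal,poko}. The bound on $\partial_t u_n$ in $L^2(L^2)$ then follows by testing with $P_n\partial_t u_n$ and combining the stress and convection estimates with $G\in L^2(L^2)$.

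The main obstacle is unambiguously the convective-term estimate inside the second a priori bound, and that is where the restriction $p=11/5$ is forced. However, since the right-hand side $G$ enters only as an $L^2(0,T;L^2)$ external forcing from the viewpoint of Proposition \ref{pomoc1.p2}, the coupling with the kinetic equation plays no role here and the result is exactly the classical strong-solution theorem for power-law fluids on the torus; I therefore expect the authors to invoke the standard reference \cite{poko} rather than redo the interpolation by hand.
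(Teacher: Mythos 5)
Your proposal is correct and follows essentially the same route the paper takes --- in fact the paper does not redo the argument but cites \cite[Theorem~4.5]{mal} and only sketches the two a priori estimates. The single technical divergence worth flagging is the test function in the second estimate: you test with $-P_n\Delta u_n$, whereas the paper (following \cite{mal}) tests with $(1+\|\nablax u(t)\|_{L^2}^{-\lambda})\Delta u$ for a suitable $\lambda\in[0,1]$. The extra weight is not cosmetic at the borderline exponent $p=11/5$: the unweighted test function, after the Gagliardo--Nirenberg/Young absorption of the cubic convective term $\int(u\cdot\nablax)u\cdot\Delta u\,dx$, leaves a superlinear Gronwall inequality in $\|\nablax u\|_2^2$ whose blow-up time you cannot control by the first estimate alone at the critical $p$. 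The weight is chosen precisely so that the resulting inequality becomes linear in $\|\nablax u\|_2^2$ (after dividing by the power that appears), so that Gronwall closes globally. For $p$ strictly above the threshold your unweighted version works, and your overall structure (Galerkin, Aubin--Lions, monotonicity via \eqref{tau3.p2}, testing with $P_n\partial_t u_n$, uniqueness via \eqref{tau3.p2} plus Gronwall) is the standard one the paper relies on.
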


\begin{proof}
The proof can be found in \cite[Theorem 4.5]{mal}. The proof is based on the structure of $\tau(\cdot)$. We may consider different types of approximations for which it is not difficult to construct solutions. To obtain a-priori estimates allowing to pass from the approximate problem to the original one, we first test the approximate problem by the velocity. The pressure and the convective term disappear due to the divergence-free condition and the time derivative and the stress tensor (property \eqref{tau2.p2}) yield the estimates of the velocity in $L^\infty(0,T;L^2(\tor^3))$ and in $L^p(0,T;W^{1,p}(\tor^3))$. Next step consists in testing by $(1+\|\nablax u(t)\|^{-\lambda}_{L^2(\tor^3)})\Delta u$ for suitable $\lambda \in [0,1]$. The time derivative and the structure of the stress tensor (more precisely, property \eqref{tau4.p2}) provide now estimates in $L^\infty(0,T;W^{1,2}(\tor^3))$, $L^p(0,T;W^{1,3p}(\tor^3))$  and $L^2(0,T;W^{2,2}(\tor^3))$. However, the convective term does not disappear now and we need to control a term of the form $\sim |\nablax u|^3$ on the right hand-side, using estimates from the first step together with the form of the left-hand side. It is possible to estimate the cubic term for $p\geq \frac{11}{5}$ and get
the following bound 
\begin{equation}\label{hen}
 \sup_{t<T} \|u\|_{W^{1,2}(\tor^3)}^2 + \|\nablax^{(2)} u\|_{L^2(0,T;L^2(\tor^3))}^2 + \|\nablax u\|_{L^{p}(0,T; L^{3p}(\tor^3))}^p \leq C(\|G\|_{L_2(\tor^3 \times (0,T))}^2 +\|u_0\|_{W^{1,2}(\tor^3)}^2),
\end{equation}
where the constant $C$ depends also on the estimates from the first step, i.e. on the norms of the velocity in $L^\infty(0,T;L^2(\tor^3))$ and in $L^p(0,T;W^{1,p}(\tor^3))$. Moreover, the velocity $u$ can be used as a test function in the weak formulation which allows to prove the uniqueness of the solution. Finally, using as a test function $\partial_t u$, we deduce the estimates of $u$ in $L^\infty(0,T;\dot{W}^{1,p}(\tor^3))$ and $\partial_t u$ in $L^2((0,T)\times \tor^3)$.

In a sense, we repeat the idea of the proof for the two dimensional Navier--Stokes system, but with better integrability given by the features of $\tau(\cdot)$ for $p\geq 11/5$.
\end{proof}

The coupling is realized by force $\bar G$.
Let \mbox{$u\in L^\infty(0,T;L^2_{div}(\tor^3))$} and $f\in  C^1([0,T]\times\tor^3\times\r^3)$, then
\begin{align}
\int_0^T\|\bar G\|_{L^2(\tor^3)}^2&=\int_0^T\int_{\tor^3}\left|\int_{\r^3}(\theta_\epsilon * \bar u-v)\gamma_\epsilon f dv\right|^2dxdt\nonumber\\
&=\int_0^T\int_{\tor^3}\left|\int_{B(\frac{1}{\epsilon})}(\theta_\epsilon * \bar u-v)\gamma_\epsilon
f dv\right|^2dxdt\nonumber\\
&\leq C(T,\epsilon)\int_0^T\int_{\tor^3}\int_{B(\frac{1}{\epsilon})}|\theta_\epsilon * \bar u -v|^2|f|^2dvdxdt\nonumber\\
&\leq C(T,\epsilon)\|f\|_{L^\infty((0,T)\times \tor^3\times \r^3)}^2(\|\bar u\|_{L^2((0,T)\times \tor^3)}^2+1).\label{ext.p2}
\end{align}
Therefore $\bar G$ belongs to $L^2(0,T;L^2(\tor^3))$ with its norm depending on $T,\epsilon$, $\|f\|_{L^\infty((0,T)\times \tor^3\times \r^3)}$ and $\|u\|_{L^\infty(0,T;L^2(\tor^3))}$. 
Thus, by Proposition \ref{pomoc1.p2}, there exists a unique $u$ --- a solution to (\ref{pomocu.p2}) in the sense of Definition \ref{sol.p2}.
Existence of a unique $f$ --- a solution to (\ref{pomocf.p2}) belonging additionally to the space $C^1([0,T]\times\tor^3\times\r^3)$ --- follows then by Proposition \ref{pomoc2.p2}. 
Note that $C(T,\epsilon)$ decreases to zero as $T\to 0$.

Definition of $f$ and $u$ constructed by $\bar u$ determines a map $\mathcal{T}:V \to L^\infty(0,T;L^2(\tor^3))$ such that $\mathcal{T}(\bar u) =u$, where $u$ is the solution to (\ref{pomocu.p2})
and $f$ to (\ref{pomocf.p2}). We need to show $\mathcal{T}$ maps $V$ into itself, it is continuous and compact. There is no need to explain the set $V$ is convex.

\subsection{Compactness}

Our next step is to prove that map $\mathcal{T}$ has a fixed point in $V$ and it defines a solution of \eqref{regul.p2}.
We begin with estimates for $u$ and $f$ in ${\mathcal H}$ and ${\mathcal X}$, respectively.

\begin{prop}\label{estimun.p2}
Given $\epsilon >0$, let $\underline{C} > \|u_0\|_2$ and $\bar u\in V$. Then there exists $T>0$ such that
\begin{equation}
 \|u\|_{L^\infty(0,T;L^2(\tor^3))}\leq \underline{C}.
\end{equation}
Moreover, there exist positive constants $C(\epsilon)$ and $C$ such that $\{u,f\}$, satisfy the following bounds:
\begin{align*}
&(i)&\|u\|_{\mathcal H}\leq C(\epsilon), \\
&(ii)&\|\partial_t u\|_{L^2(0,T;L^2(\tor^3))}\leq C(\epsilon),\\
&(iii)&\|f\|_{\mathcal X}\leq C,\\
&(iv)& \|f\|_{C^1}\leq C(\epsilon),
\end{align*}
where $C$ is independent of  $\epsilon$. Moreover, there exists a non-decreasing 
function ${\mathcal R}_\epsilon:[0,T]\to[0,\infty)$ such that
\begin{align}\label{supest.p2}
{\rm supp}f(t,x,\cdot)\subset B({\mathcal R}_\epsilon(t)),\ \ \ {\rm for\ all}\ t\ {\rm and\ a.a}\ x.
\end{align}
\end{prop}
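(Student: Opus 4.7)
The plan is to combine Propositions \ref{pomoc2.p2} and \ref{pomoc1.p2} along the map $\bar u\mapsto f\mapsto u$ defined in Section \ref{secregul.p2} and to add two new ingredients: the $v$-support estimate $\mathcal{R}_\epsilon(t)$ and the self-mapping property $\|u\|_{L^\infty(0,T;L^2)}\leq\underline{C}$. Given $\bar u\in V$, Proposition \ref{pomoc2.p2} directly yields $f\in C^1([0,T]\times\tor^3\times\r^3)$ satisfying items (iii) and (iv). The external force $\bar G$ belongs to $L^2(0,T;L^2(\tor^3))$ by \eqref{ext.p2}, so Proposition \ref{pomoc1.p2} yields $u\in\mathcal{H}$ together with $\partial_t u\in L^2((0,T)\times\tor^3)$, i.e.\ items (i) and (ii). The genuinely new work is therefore the support bound and the self-mapping property.

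For the support bound, I would use characteristics of \eqref{pomocf.p2}:
\begin{equation*}
\dot X(t)=V(t),\qquad \dot V(t)=a(t,X)-b(t,X)V+\bigl(\theta_\epsilon*\bar u(t,X)-V\bigr)\gamma_\epsilon(V).
\end{equation*}
In the region $|V|>1/\epsilon$ the cut-off $\gamma_\epsilon$ vanishes, so only the Cucker--Smale part acts. By \eqref{2.8d} we have $\|b\|_\infty\leq c_6$, and since $f$ remains compactly supported in $B(\mathcal{R}_\epsilon(t))$ with unit $L^1$ mass, $\|a\|_\infty\leq c_6M_1f\leq c_6\mathcal{R}_\epsilon(t)$. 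This yields $\frac{d}{dt}|V|\leq c_6(1+\mathcal{R}_\epsilon(t)+|V|)$ for characteristics that have escaped the cut-off region. In the inner region $|V|\leq 1/\epsilon$ the drag force is bounded by $|\theta_\epsilon*\bar u|+1/\epsilon\leq C(\epsilon)\underline{C}$ by Young's inequality for convolutions. A Gronwall argument applied to $\mathcal{R}_\epsilon$ (defined as the maximum of $|V(t)|$ over all characteristics starting in $\mathrm{supp}\,f_0\subset B(R)$) then produces a non-decreasing function finite on $[0,T]$.

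For the self-mapping property, I would test $\eqref{pomocu.p2}_1$ by $u$, use $\di u=0$ to cancel the convective and pressure terms, and discard the non-negative dissipation $\int\tau(Du):Du\,dx\geq 0$ provided by \eqref{tau2.p2}, leaving
\begin{equation*}
\tfrac{1}{2}\tfrac{d}{dt}\|u\|_2^2\leq \|\bar G\|_2\,\|u\|_2,
\end{equation*}
hence $\|u(t)\|_2\leq \|u_0\|_2+T^{1/2}\|\bar G\|_{L^2(0,T;L^2)}$. Since $\bar u\in V$ gives $\|\bar u\|_{L^2((0,T)\times\tor^3)}^2\leq T\underline{C}^2$, combining with \eqref{ext.p2} yields
\begin{equation*}
\|u\|_{L^\infty(0,T;L^2)}\leq \|u_0\|_2+\bigl[T\,C(T,\epsilon)\|f\|_\infty^2(T\underline{C}^2+1)\bigr]^{1/2}.
\end{equation*}
Because $\|u_0\|_2<\underline{C}$ and $C(T,\epsilon)\to 0$ as $T\to 0^+$ (as pointed out just below \eqref{ext.p2}), I may choose $T=T(\epsilon,\underline{C},\|u_0\|_2)>0$ so small that the right-hand side does not exceed $\underline{C}$.

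The main obstacle is the mild circularity in the support argument --- the radius $\mathcal{R}_\epsilon(t)$ controls $M_1f$, which in turn controls its own growth rate --- but this is resolved by Gronwall. A secondary delicate point is that the admissible $T$ shrinks with $\epsilon$ and that the $\epsilon$-dependent constants in (i), (ii), (iv) deteriorate as $\epsilon\to 0$; this is harmless at the fixed-$\epsilon$ Schauder stage treated here but will force us to derive a separate set of $\epsilon$-uniform bounds when passing to the limit $\epsilon\to 0$ in the next subsections.
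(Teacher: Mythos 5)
Your proof is correct. Items (i)--(iv) do follow directly from Propositions \ref{pomoc2.p2} and \ref{pomoc1.p2} together with \eqref{ext.p2}, as you say, and your self-mapping argument (integrating $\frac{d}{dt}\|u\|_2\leq\|\bar G\|_2$ directly rather than applying Gronwall after Young's inequality as the paper does) is marginally cleaner and yields the same conclusion. Where you genuinely diverge from the paper is the support estimate \eqref{supest.p2}. You split the characteristics into cutoff regions $|V|\gtrless 1/\epsilon$ and, in the inner region, bound the drag by $\|\theta_\epsilon*\bar u\|_\infty\leq C(\epsilon)\underline{C}$; the paper (Lemma \ref{nos.p2}) instead writes the explicit Duhamel representation of $V(t)$, observes that the terms generated by $\gamma_\epsilon$ contribute only an $\epsilon$-uniform exponential factor, and bounds the inhomogeneity by $\int_0^t\bigl(\|a\|_\infty+\|(\theta_\epsilon*u)\|_\infty\bigr)ds\leq t\|M_1f\|_{L^\infty}+C\|u\|_{L^2(0,t;W^{2,2})}$, closing the loop with Lemma \ref{lemr.p2}. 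Both routes produce a valid non-decreasing $\mathcal{R}_\epsilon$, so the present proposition is proved either way. What the paper's formulation buys, however, is that $\mathcal{R}_\epsilon$ is expressed through precisely those quantities --- $\|u\|_{L^2(0,t;W^{2,2})}$ and $\|M_1f\|_{L^\infty}$ --- which Proposition \ref{momprop.p2} later shows to be bounded uniformly in $\epsilon$; this is what lets Corollary \ref{bounds.p2} reuse Lemmas \ref{nos.p2}--\ref{lemr.p2} verbatim to get an $\epsilon$-independent support. Your cutoff-region bound, relying on the radius $1/\epsilon$ and on $\|\theta_\epsilon*\bar u\|_\infty\sim\epsilon^{-3/2}\|\bar u\|_2$, is intrinsically $\epsilon$-dependent and cannot be recycled; you correctly flag this, but it means the support control would have to be rederived from scratch at the $\epsilon\to 0$ stage. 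One could argue the paper's choice is the more economical one for the overall scheme.
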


\begin{proof}
By Proposition \ref{pomoc1.p2} and the definition of $u$ it is clear that to obtain estimate of $u$ in ${\mathcal H}$  it suffices to estimate $\|\bar G\|_{L^2(0,T;L^2(\tor^3))}$.
By testing the weak formulation  by $u$ (which by Proposition \ref{pomoc1.p2} is a suitable test function), applying Korn's inequality  and (\ref{tau2.p2}) we obtain
\begin{align*}
\frac{1}{2}\frac{d}{dt}\|u\|_2^2+ c_1\kappa\|\nablax u\|_p^p\leq \|u\|_2^2 + \|\bar G\|_2^2,
\end{align*}
which by inequality (\ref{ext.p2}) and (\ref{finfty.p2}) implies that
\begin{align*}
\frac{1}{2}\frac{d}{dt}\|u\|_2^2+ c_1\kappa\|\nablax u\|_p^p \leq \|u\|_2^2+C(T,\epsilon)\|\bar u\|_2^2 + C(T,\epsilon).
\end{align*}
Therefore by Gronwall's lemma there exists $T$ depending on $\epsilon$, such that
\begin{align*}
\|u\|_{L^\infty(0,T;L^2(\tor^3))} \leq \underline{C},
\end{align*}
Using Proposition \ref{pomoc1.p2} we finish the proof of $(i)$. 

The proof of $(ii)$ follows similarly to the proof of $(i)$ by testing the weak formulation for $u$ with $\partial_tu$ and using the previously proved estimates.


We continue with  estimates of $f$.
The key point concerns the propagation of the support.
The estimate of the support of $f$ is proved in Lemmas \ref{nos.p2} and \ref{lemr.p2} below. Lemma \ref{nos.p2} shows that ${\mathcal R}_\epsilon(t)$ 
depends on $\|u\|_{L^2(0,T;W^{2,2}(\tor^3))}$ and $\|M_1f\|_{L^\infty(0,T)}$. On the other hand, in Lemma \ref{lemr.p2} we prove that $\|M_1f\|_{L^\infty(0,T)}$ is uniformly bounded in terms of
$\|u\|_{L^2(0,T;W^{2,2}(\tor^3))}$. Therefore, by $(i)$ from Proposition \ref{estimun.p2}, the function ${\mathcal R}_\epsilon$ is independent of $n$ but depends on $\epsilon$. 
This observation concludes the proof of (\ref{supest.p2}).

\begin{lem}[Propagation of the support of velocity]\label{nos.p2}
Let $f$ be a solution to (\ref{pro.p2}) subjected to the initial data with the support in $v$ contained in the ball $B(R)$. Then there exists a non-decreasing function ${\mathcal R}:[0,T]\to[0,\infty)$ 
such that for all $t\in[0,T]$ and almost all $x\in\tor^3$, the support of $f(t,x,\cdot):\r^3\to\r$ is contained in a ball of radius ${\mathcal R}(t)$. Moreover, for each $t\in[0,T]$ the value ${\mathcal R}(t)$ 
depends only on $t$, $\|u\|_{L^2(0,t;W^{2,2}(\tor^3))}$, $\|M_1f\|_{L^\infty(0,T)}$ and $R$. 
\end{lem}
\begin{proof}
Let $f$ be a solution to (\ref{pro.p2}). Consider the solution of the system of ODE's:
\begin{align}\label{ode.p2}
\left\{
\begin{array}{rll}
\frac{dx}{dt}(t) &= v(t), & x(0)=x_0,\\
\frac{dv}{dt}(t) &= F_{CS}(f)(t,x(t),v(t))+[(\theta_\epsilon *u)(t,x(t))-v(t)]\gamma_\epsilon(v(t)),& v(0)=v_0.
\end{array}
\right.
\end{align}
Then the function $\tilde{f}(t,x_0,v_0):=f(t,x(t),v(t))$ satisfies the equation
\begin{align*}
\partial_t \tilde{f} = -\left(\dv F_{CS}(f)+\dv [(\theta_\epsilon * u(t,x(t))-v) \gamma_\epsilon(v)]\right)\tilde f.
\end{align*}
Note that we are required to look at the three terms coming from the divergence: $\gamma_\epsilon(v)$, $\theta_\epsilon * u \nabla_v \gamma_\epsilon(v)$ and 
$v\cdot \nabla_v \gamma_\epsilon(v)$. By definition of $\gamma_\epsilon(\cdot)$ from the beginning of Section 3.1 we see that
\begin{equation*}
 |\gamma_\epsilon(v)| + |v\cdot \nabla_v \gamma_\epsilon(v)|\leq C
\end{equation*}
with $C$ independent of $\epsilon$. On other hand, using the explicit form of $\theta_\epsilon$, it is possible to compute  that $|(\theta_\epsilon * u)(t,x) | \leq \|(\theta_\epsilon*u)(t,\cdot)\|_{L_\infty}\leq C\epsilon^{-\frac{1}{2}}\|u(t,\cdot)\|_{L^2(\tor^3)}$, 
so we conclude
\begin{equation*}
 |(\theta_\epsilon * u)(t,x) \nabla_v \gamma_\epsilon(v)|\leq C \epsilon^\frac{1}{2}.
\end{equation*}
Hence, recalling $b$ is defined in (\ref{b.p2}), we obtain
\begin{equation}\label{fff}
\tilde{f}(t,x(t),v(t)) = e^{3\int_0^t(b+B)ds}f_{0}(x(t),v(t)),
\mbox{ \ \ \  where  \ }
 \|B(t)\|_{L_\infty}\leq C(\|u(t,\cdot)\|_{L^2(\tor^3)}+C)
\end{equation}
for sufficiently small $\epsilon$. Let us note at this point that the above estimate provides a proof of $L^\infty$ bound from \eqref{finfty.p2}. 

Therefore, $\tilde{f}(t,x_0,v_0)=0$ whenever $f_{0}(x_0,v_0)=0$ which implies that $f(t,x,v) = 0$ whenever the characteristic that contains point $(x,v)$ starts at $(x_0,v_0)$
such that $f_{0}(x_0,v_0)=0$. We solve $(\ref{ode.p2})_2$, to get
\begin{align*}
v(t) = e^{-\int_0^t(b(s,x(s))+B(s,x(s),v(s)))ds}\times \\
\times \left(v_0 + \int_0^te^{\int_s^t(b(r,x(r))+ B(s,x(s),v(s)) )dr}[a(s,x(s))+(\theta_\epsilon*u)(s,x(s))]\gamma_\epsilon (v(s))ds\right),
\end{align*} 
which, since by (\ref{2.8d}) $1\leq b+1\leq c_6M_0f+1$, by (\ref{finfty.p2}) $M_0f=1$, and $B$ is bounded in terms of the norm  $L^2(0,T;W^{2,2}(\tor^3))$, implies that
\begin{align*}
|v(t)| &\leq Ce^{Ct}\left(|v_0| + \int_0^t|a(x(s),s)|ds + \int_0^t\|u(s)\|_\infty ds\right)\\ 
& {\leq} Ce^{Ct}\left(|v_0|+ t\|M_1f\|_{L^\infty(0,T)} + \|u\|_{L^2(0,t;W^{2,2}(\tor^3))}\right)\\
&\leq Ce^{Ct}\left(R + t\|M_1f\|_{L^\infty(0,T)} + \|u\|_{L^2(0,T;W^{2,2}(\tor^3))}\right) =:{\mathcal R}(t),
\end{align*}
where we also used the embedding $L^2(0,t;W^{2,2}(\tor^3))\hookrightarrow L^1(0,t;L^\infty(\tor^3))$. We will underline at this stage that the estimate depends on $\epsilon$, but at the end of the proof of existence, this estimate will imply that the support of $f$ is bounded independently of $\epsilon$.
\end{proof}

\begin{lem}\label{lemr.p2}
Let $f$ be a solution to (\ref{pro.p2}) subjected to the initial data with the support in $v$ contained in the ball $B(R)$. Then
\begin{align*}
M_1f\leq C(\epsilon),
\end{align*}
for some positive $\epsilon$-dependent constant $C(\epsilon)$.
\end{lem}
\begin{proof}
First we integrate (\ref{pro.p2}) to see that $M_0f=1$. Next
we multiply (\ref{pro.p2}) by $|v|$ and integrate to get
\begin{align*}
0=\frac{d}{dt}M_1 f + \underbrace{\int_{\tor^3\times\r^3}|v|v\cdot \nablax fdxdv}_{=0} + \int_{\tor^3\times\r^3}|v|\dv\big[(F_{CS}(f)+(\theta_\epsilon*u-v)\gamma_\epsilon) f\big] dxdv\\
= \frac{d}{dt}M_1 f - \int_{\tor^3\times\r^3}\frac{v}{|v|} \cdot \big(F_{CS}(f)+(\theta_\epsilon*u-v)\gamma_\epsilon\big)fdxdv.
\end{align*}
Thus
\begin{align*}
\frac{d}{dt}M_1 f &= \int_{\tor^3\times\r^3}\frac{v}{|v|} \cdot F_{CS}(f)fdxdv + \int_{\tor^3\times\r^3}\frac{v}{|v|}\cdot \theta_\epsilon*u \gamma_\epsilon fdxdv - \int_{\tor^3\times\r^3}|v|f\gamma_\epsilon dxdv\\
&\leq \int_{\tor^3\times\r^3} |F_{CS}(f)|fdxdv + \int_{\tor^3\times\r^3}|\theta_\epsilon*u|fdxdv\\
&\stackrel{(\ref{oszacF.p2})}{\leq} CM_1fM_0f + C\|u\|_{W^{2,2}(\tor^3)}M_0f.
\end{align*}
Since $M_0f=1$, it implies that $M_1f$ is bounded on $[0,T]$ if $M_1f_0$ is finite.
\end{proof}


{\bf Proof of $(iii)$ and $(iv)$.} Even though it follows directly from Proposition \ref{pomoc2.p2}, it is worthwhile to note that the conservation of mass is trivial, while the $L^\infty$ bound was shown by \eqref{fff}.
\end{proof}

With the estimates provided by Proposition \ref{estimun.p2}  we are ready to prove the following proposition that states the existence of solutions to the regularized system (\ref{regul.p2}) and finishes the part 4 of the proof of 
Theorem \ref{exiful.p2}.

\begin{prop}\label{lim.p2}
Given $\underline{C}$ defined as in (\ref{defV}) and $T$ given by Proposition \ref{estimun.p2}. Then there exists a fixed point of $\mathcal{T}$ belonging to $V$. In addition $u$ fulfills of all regularity 
described by Proposition \ref{estimun.p2}.
\end{prop}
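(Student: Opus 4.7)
The plan is to apply Schauder's fixed point theorem to $\mathcal{T}$ acting on $V$. The set $V$ is non-empty, bounded, convex, and closed in $L^\infty(0,T;L^2(\tor^3))$, and Proposition \ref{estimun.p2}$(i)$ already establishes $\mathcal{T}(V)\subset V$ for the $T$ specified there. What remains is to verify (a) relative compactness of $\mathcal{T}(V)$ in the ambient topology and (b) continuity of $\mathcal{T}$.

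For (a), I would invoke the uniform bounds
\begin{equation*}
\|u\|_{L^2(0,T;W^{2,2}(\tor^3))}+\|u\|_{L^\infty(0,T;W^{1,2}(\tor^3))}+\|\partial_t u\|_{L^2(0,T;L^2(\tor^3))}\leq C(\epsilon)
\end{equation*}
holding for $u=\mathcal{T}(\bar u)$, $\bar u\in V$, supplied by Proposition \ref{estimun.p2}$(i)$--$(ii)$, and apply the Aubin--Lions lemma to conclude that $\mathcal{T}(V)$ is relatively compact in $C([0,T];L^2(\tor^3))$ and hence in $L^\infty(0,T;L^2(\tor^3))$.

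For (b), I would take a sequence $\bar u_n\to\bar u$ in $L^\infty(0,T;L^2(\tor^3))$ with $\bar u_n\in V$, and denote the corresponding Vlasov and fluid solutions by $f_n$ and $u_n=\mathcal{T}(\bar u_n)$. Because $\theta_\epsilon$ is a fixed smooth mollifier, $\theta_\epsilon*\bar u_n\to\theta_\epsilon*\bar u$ in $L^\infty((0,T)\times\tor^3)$ together with all spatial derivatives, so the regularized drag field $(\theta_\epsilon*\bar u_n-v)\gamma_\epsilon$ converges uniformly on $[0,T]\times\tor^3\times B(1/\epsilon)$. Combining this with the uniform $C^1$ bound from Proposition \ref{estimun.p2}$(iv)$ and the $n$-independent support estimate \eqref{supest.p2}, Arzel\`a--Ascoli provides a subsequence with $f_n\to f$ uniformly on compact sets, and the limit $f$ is the unique solution to \eqref{pomocf.p2} driven by $\bar u$. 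This strong convergence then yields $\bar G_n\to \bar G$ in $L^2(0,T;L^2(\tor^3))$, so by the compactness from step (a) together with weak compactness in $\mathcal{H}$, one extracts $u_n\to u$ strongly in $C([0,T];L^2)$ and weakly in $\mathcal{H}$; passing to the limit in the weak formulation of \eqref{pomocu.p2} identifies $u=\mathcal{T}(\bar u)$, and uniqueness from Proposition \ref{pomoc1.p2} forces convergence of the whole sequence.

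The main obstacle is the passage to the limit in the nonlinear stress term $\tau(Du_n)$: a priori only weak convergence of $\nablax u_n$ in $L^p((0,T)\times\tor^3)$ is available, so identifying the weak limit requires the monotonicity property \eqref{tau3.p2} through a Minty-type argument, using the strong convergence of $u_n$ in $L^2$ already obtained. Once continuity is in place, Schauder's theorem delivers a fixed point $u^*\in V$; the pair $(f,u^*)$, with $f$ the Vlasov solution driven by $u^*$, solves the regularized system \eqref{regul.p2} and inherits the full regularity catalogue of Proposition \ref{estimun.p2}.
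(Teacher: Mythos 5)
Your argument is correct and matches the paper's outline: Schauder's fixed point theorem applied to $\mathcal{T}$ on the convex set $V$, with compactness furnished by the Aubin--Lions lemma through the bounds of Proposition~\ref{estimun.p2}$(i)$--$(ii)$, and continuity established by a stability argument for the Vlasov and fluid steps. The paper's own proof is essentially a one-line pointer ("Continuity of map $\mathcal{T}$ follows from stability of solvability given by Propositions~\ref{pomoc2.p2} and~\ref{pomoc1.p2}"), so your filled-in continuity argument --- uniform $C^1$ and support control on $f_n$ yielding Arzel\`a--Ascoli compactness, hence strong convergence of $\bar G_n$, hence passage to the limit in the fluid equation --- is exactly the content the paper leaves implicit.

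One remark on the stress term: you flag the identification of the weak limit of $\tau(Du_n)$ as the main obstacle and propose a Minty-type monotonicity argument. That works, but it is heavier than necessary at this stage of the proof. Proposition~\ref{estimun.p2}$(i)$--$(ii)$ supplies $\epsilon$-dependent but $n$-uniform bounds in $L^2(0,T;W^{2,2})$ and on $\partial_t u_n$ in $L^2(0,T;L^2)$, so Aubin--Lions with the triple $W^{2,2}\hookrightarrow\hookrightarrow W^{1,2}\hookrightarrow L^2$ gives strong convergence of $\nablax u_n$ in $L^2((0,T)\times\tor^3)$, hence a.e.\ convergence of $Du_n$ along a subsequence. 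Combined with the $L^{p'}$ bound on $\tau(Du_n)$ from \eqref{tau2.p2}, Vitali's theorem yields $\tau(Du_n)\to\tau(Du)$ strongly in $L^{p'-\eta}$, which suffices to pass to the limit. This is precisely the route the paper takes later when sending $\epsilon\to 0$ in Section~\ref{33}, and it avoids having to verify the hypotheses for Minty's trick (in particular, that the limit $u$ may be inserted as a test function and that the energy equality holds at the level of the limit equation). Either route is sound; the a.e.-plus-Vitali route is what the available compactness makes cheapest.
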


\begin{proof}
Continuity of map $\mathcal{T}$ follows from stability of solvability given by Propositions \ref{pomocf.p2} and \ref{pomocu.p2}. As $u \in V$ then $G \in L^2(0,T;L^2(\tor^3))$, hence $u\in \mathcal{H}$ and $u_t \in L^2(0,T;L^2(\tor^3))$. These facts imply
compactness. By Schauder's theorem we found a fixed point of map $\mathcal{T}$ belonging to $V$. The extra regularity comes from Proposition \ref{estimun.p2}. Properties of $f$ are concluded from Proposition \ref{pomocf.p2}.
We are done.

\end{proof}

\subsection{Local convergence with the regularized solutions}\label{33}

Until now we  proved existence of solutions to the regularized system (\ref{regul.p2}). The next goal is to converge with $\epsilon$ to $0$ and to obtain local-in-time existence for (\ref{sys}).

\begin{prop}\label{momprop.p2}
Let $p\geq\frac{11}{5}$ and $(f_{\epsilon},u_{\epsilon})$ be a solution to system (\ref{regul.p2}) constructed as a limit of the approximate solutions as proved in Proposition \ref{lim.p2}. Then there exists 
$T^*\in(0,T]$, such that $(f_{\epsilon},u_{\epsilon})$ satisfies the following estimates
\begin{align}
&\|M_\alpha f_{\epsilon}\|_{L^\infty[0,T^*]} \leq C(T), & {\rm for}\ \ \ 0\leq\alpha\leq 2,\label{malfa.p2}\\
&\|u_\epsilon\|_{L^\infty(0,T^*;L^2_{div}(\tor^3))\cap L^p(0,T^*;\dot{W}^{1,p}_{div}(\tor^3))} \leq C(T),\label{ualfa.p2}\\
&\left\|\int_{\r^3}(\theta_\epsilon * u_{\epsilon}-v) \gamma_\epsilon f_{\epsilon}dv\right\|_{L^2(0,T^*;L^2(\tor^3))} \leq C(T),\label{exalfa.p2} 
\end{align}
where $C(T)$ is a positive constant depending on the initial data and $T$.
\end{prop}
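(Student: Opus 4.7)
The plan is to derive an $\epsilon$-independent energy identity from the regularized system \eqref{regul.p2} by testing the kinetic equation against $|v|^2/2$ and the fluid equation against $u_\epsilon$, and then to close the resulting inequality by a bootstrap on the auxiliary quantity $\|m_0 f_\epsilon\|_{L^\infty}$ over a time interval $[0,T^*]$ with $T^*$ chosen independently of $\epsilon$.

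First, I would perform the combined energy computation. The transport in $x$ vanishes by periodicity; the $F_{CS}$-contribution to $\tfrac{d}{dt}M_2 f_\epsilon$ is non-positive thanks to the standard anti-symmetry of $\psi(|x-y|)(w-v)$ under $(x,v)\leftrightarrow(y,w)$; the stress tensor yields $c_1\kappa\|\nabla_x u_\epsilon\|_p^p$ via \eqref{tau2.p2} and Korn's inequality; and the pressure and convective terms drop out by incompressibility. The coupling between the two equations combines into
\begin{align*}
-\int_{\tor^3\times\r^3}|\theta_\epsilon*u_\epsilon-v|^2\gamma_\epsilon f_\epsilon\,dxdv + \int_{\tor^3\times\r^3}(\theta_\epsilon*u_\epsilon-v)\gamma_\epsilon(u_\epsilon-\theta_\epsilon*u_\epsilon)f_\epsilon\,dxdv;
\end{align*}
the first piece is a good dissipation, while the commutator-type second piece is split by Young's inequality into half of the dissipation (absorbed) plus a remainder bounded by $C\|m_0 f_\epsilon\|_\infty\|u_\epsilon\|_2^2$. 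Altogether this yields
\begin{align*}
\tfrac{d}{dt}\bigl(M_2 f_\epsilon + \|u_\epsilon\|_2^2\bigr) + c_1\kappa\|\nabla_x u_\epsilon\|_p^p \leq C\|m_0 f_\epsilon\|_\infty\bigl(\|u_\epsilon\|_2^2 + M_2 f_\epsilon\bigr),
\end{align*}
together with integrable control of the drag dissipation $\int|\theta_\epsilon*u_\epsilon-v|^2\gamma_\epsilon f_\epsilon$.

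Next, I would run the bootstrap. Since $\mathrm{supp}\,f_{0,\epsilon}(x,\cdot)\subset B(R)$ and $\|f_{0,\epsilon}\|_\infty\le\|f_0\|_\infty$ uniformly in $\epsilon$, the initial value satisfies $\|m_0 f_{0,\epsilon}\|_\infty\le C_0$ uniformly. Define $T^*\in(0,T]$ as the largest time on which $\|m_0 f_\epsilon\|_{L^\infty([0,T^*]\times\tor^3)}\le 2C_0$. On $[0,T^*]$, Gronwall's lemma applied to the display above produces \eqref{ualfa.p2} with a constant independent of $\epsilon$. Bound \eqref{malfa.p2} for $0\le\alpha\le 2$ then follows by interpolating between $M_0 f_\epsilon\equiv 1$ and the just-obtained $M_2 f_\epsilon$. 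For \eqref{exalfa.p2} I would use Cauchy--Schwarz in $v$,
\begin{align*}
\Bigl|\int_{\r^3}(\theta_\epsilon*u_\epsilon-v)\gamma_\epsilon f_\epsilon\,dv\Bigr|^2 \le m_0 f_\epsilon(x)\int_{\r^3}|\theta_\epsilon*u_\epsilon-v|^2\gamma_\epsilon f_\epsilon\,dv,
\end{align*}
and integrate over $\tor^3\times[0,T^*]$ using the drag dissipation bound. To see that $T^*$ can be chosen independently of $\epsilon$, one feeds \eqref{exalfa.p2} into Proposition \ref{pomoc1.p2}, obtaining a uniform bound on $u_\epsilon$ in $L^2(0,T^*;W^{2,2}(\tor^3))$; Lemmas \ref{nos.p2} and \ref{lemr.p2} then yield a uniform-in-$\epsilon$ support radius $\mathcal{R}(t)$, so that $\|m_0 f_\epsilon(t,\cdot)\|_\infty\le\|f_\epsilon\|_\infty\cdot\tfrac{4\pi}{3}\mathcal{R}(t)^3$ is a continuous function of $t$ starting strictly below $2C_0$, which guarantees that the strict inequality persists on some $[0,T^*]$ depending only on the initial data.

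The main obstacle is exactly this circular dependence: uniform-in-$\epsilon$ $L^2W^{2,2}$-regularity of $u_\epsilon$ needs a uniform $L^2L^2$ bound on the coupling term, which needs a uniform bound on $\|m_0 f_\epsilon\|_\infty$, which needs a uniform support bound for $f_\epsilon$, which in turn needs the higher $u_\epsilon$-regularity. Breaking this loop by the above continuity argument is exactly what forces the local-in-time statement and the appearance of $T^*\le T$; the remaining step to globalise is deferred to the cleaner energy identity \eqref{ee} available in the limit.
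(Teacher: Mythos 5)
Your proof is correct, but it takes a genuinely different route from the paper's. Both approaches begin with the same energy computation (testing the kinetic equation with $|v|^2$, the fluid equation with $u_\epsilon$, exploiting antisymmetry of the Cucker--Smale term and the structural bound \eqref{tau2.p2}), and both isolate the same commutator term
$\int\gamma_\epsilon(\theta_\epsilon*u_\epsilon - v)\cdot(\theta_\epsilon*u_\epsilon - u_\epsilon)f_\epsilon$.
From there the strategies diverge in three places. (1) You bound the commutator remainder by $\|m_0 f_\epsilon\|_\infty\|\theta_\epsilon*u_\epsilon - u_\epsilon\|_2^2 \le C\|m_0 f_\epsilon\|_\infty\|u_\epsilon\|_2^2$, which is linear in the energy but requires a priori control of $\|m_0 f_\epsilon\|_\infty$; the paper instead uses H\"older and Young to produce $\eta\|\theta_\epsilon*u_\epsilon - u_\epsilon\|_6^p + C(\eta)\|m_0 f_\epsilon\|_{3/2}^{p/(p-2)}$, absorbs the first piece into the $\|\nabla u_\epsilon\|_p^p$ dissipation via Young's inequality for convolutions plus Sobolev, and controls $\|m_0 f_\epsilon\|_{3/2}$ purely by $(M_2 f_\epsilon)^{1/2}$ via Lemma~\ref{mom.p2} --- so the resulting ODE is closed in the energy alone, no bootstrap. (2) As a consequence, you obtain the local time $T^*$ via a continuation argument on the bootstrap quantity, whereas the paper obtains $T^*$ from the nonlinear Gronwall lemma \ref{gron.p2} applied to $\frac{d}{dt}y \le C y^{p/(2p-4)}$. (3) For \eqref{exalfa.p2}, your Cauchy--Schwarz step
$\big|\int(\theta_\epsilon*u_\epsilon - v)\gamma_\epsilon f_\epsilon\,dv\big|^2 \le m_0 f_\epsilon\int|\theta_\epsilon*u_\epsilon - v|^2\gamma_\epsilon f_\epsilon\,dv$
combined with the drag dissipation and the bootstrap bound on $\|m_0 f_\epsilon\|_\infty$ is markedly shorter than the paper's $A+B$ decomposition, which passes through Gagliardo--Nirenberg estimates of $\|u_\epsilon\|_\infty$, the support-radius bound \eqref{push3}, and an absorption of $\|\nabla u_\epsilon\|_{3p}^p$ into \eqref{hen}.

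The trade-off: your argument is more economical, especially for \eqref{exalfa.p2}, but it introduces a circular dependency (energy $\to$ $G_\epsilon$ bound $\to$ $W^{2,2}$ regularity of $u_\epsilon$ $\to$ support radius $\to$ $\|m_0 f_\epsilon\|_\infty$) that must be broken by the bootstrap/continuity argument, and you correctly identify this as the main delicate point. The paper avoids the bootstrap entirely at the cost of the nonlinear Gronwall estimate and the lengthier $G_\epsilon$ analysis. If you present your version, you should make the bootstrap fully precise: define $T^*_\epsilon$ as the maximal time of validity of $\|m_0 f_\epsilon\|_\infty \le 2C_0$, derive a \emph{uniform-in-$\epsilon$} increasing majorant $g(t)$ for $\|m_0 f_\epsilon(t)\|_\infty$ on that interval (coming from $\|f_\epsilon(t)\|_\infty$, which itself grows in $t$ via \eqref{fff}, and from $\mathcal{R}(t)$ via Lemma \ref{nos.p2}), and argue by contradiction that $T^*_\epsilon \ge T^*$ where $T^*$ is the first time $g(T^*)=2C_0$.
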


\begin{proof}
 We multiply equation $(\ref{regul.p2})_1$ by $|v|^2$ and integrate to obtain
\begin{align*}
0 &= \frac{d}{dt}M_2f_\epsilon + \int_{\tor^3\times\r^3}|v|^2v\cdot\nablax f_\epsilon dxdv + \int_{\tor^3\times\r^3}|v|^2\dv[F_{CS}(f_\epsilon)f_\epsilon + \gamma_\epsilon(\theta_\epsilon * u_\epsilon - v)f_\epsilon]dxdv\\
&= \frac{d}{dt}M_2f_\epsilon - 2\int_{\tor^3\times\r^3}v\cdot F_{CS}(f_\epsilon)f_\epsilon dxdv -2\int_{\tor^3\times\r^3}\gamma_\epsilon v\cdot (\theta_\epsilon * u_\epsilon - v)f_\epsilon dxdv
\end{align*}
and since by substituting $x$ with $y$ and $v$ with $w$ (as in the estimate of ${\mathcal S}_1$ in the proof of Proposition \ref{momprop.p2}) we have
\begin{align*}
\int_{\tor^3\times\r^3}v\cdot F_{CS}(f_\epsilon)f_\epsilon dxdv \leq  -C \int_{\tor^3\times\r^3} |w-v|^2\psi(|x-y|)f_\epsilon(t,w,y) f_\epsilon(t,u,x)  dydwdxdv\leq 0,
\end{align*}
we deduce the inequality
\begin{align}\label{to.p2}
\frac{d}{dt}M_2 f_\epsilon \leq 2\int_{\tor^3\times\r^3}\gamma_\epsilon v\cdot (\theta_\epsilon * u_\epsilon - v)f_\epsilon dxdv. 
\end{align}
Next we test the weak formulation for $u_\epsilon$ by $u_\epsilon$ to get
\begin{align}\label{tego.p2}
\frac{1}{2}&\frac{d}{dt}\|u_\epsilon\|_2^2 + c_1\kappa\|\nablax  u_\epsilon\|_p^p\leq - \int_{\tor^3\times\r^3} f_\epsilon (\theta_\epsilon u_\epsilon - v)\cdot  u_\epsilon \gamma_\epsilon  dxdv\\
& = \int_{\tor^3\times\r^3} f_\epsilon (\theta_\epsilon * u_\epsilon - v)\cdot  (\theta_\epsilon * u_\epsilon -u_\epsilon) \gamma_\epsilon dxdv  
- \int_{\tor^3\times\r^3} f_\epsilon (\theta_\epsilon * u_\epsilon - v)\cdot  \theta_\epsilon * u_\epsilon \gamma_\epsilon dx dv.\nonumber 
\end{align}
We add (\ref{to.p2}) and two instances of (\ref{tego.p2}) obtaining
\begin{align}\label{1}
\frac{d}{dt}\big(M_2f_\epsilon &+ \|u_\epsilon\|_2^2\big) + 2c_1\kappa\|\nablax u_\epsilon\|_p^p + 2\int_{\tor^3\times\r^3}\gamma_\epsilon |\theta_\epsilon * u_\epsilon - v|^2f_\epsilon dxdv\nonumber \\
&\leq 2 \int_{\tor^3\times\r^3}\gamma_\epsilon (\theta_\epsilon * u_\epsilon - v)\cdot (\theta_\epsilon * u_\epsilon -u_\epsilon) f_\epsilon dxdv. 
\end{align}
H\"older's and Young's inequalities  yield the following estimate of the right-hand side
\begin{align}
 2 \int_{\tor^3\times\r^3} &\gamma_\epsilon (\theta_\epsilon * u_\epsilon - v)\cdot (\theta_\epsilon * u_\epsilon -u_\epsilon) f_\epsilon dxdv\nonumber \\
& \leq \int_{\tor^3\times\r^3}\gamma_\epsilon |\theta_\epsilon * u_\epsilon - v|^2f_\epsilon dxdv + C \|\theta_\epsilon*u_\epsilon - u_\epsilon\|_6^2 \|m_0 \sqrt{f_\epsilon}\|^2_{3}\label{2}\\
& \leq \int_{\tor^3\times\r^3}\gamma_\epsilon |\theta_\epsilon * u_\epsilon - v|^2f_\epsilon dxdv + \eta \|\theta_\epsilon*u_\epsilon - u_\epsilon\|_6^p + C(\eta)\|m_0 f_\epsilon\|_{\frac 32}^\frac{p}{p-2}.\nonumber
\end{align}
By Young's inequality for convolutions we have
$\|\theta_\epsilon*u_\epsilon - u_\epsilon\|_6^p\leq 2^p\|\nablax u_\epsilon\|_p^p,$
thus choosing a suitable $\eta$ we obtain
\begin{align*}
\frac{d}{dt}\big(M_2f_\epsilon &+ \|u_\epsilon\|_2^2\big) + c_1\kappa\|\nablax u_\epsilon\|_p^p + \int_{\tor^3\times\r^3}\gamma_\epsilon |\theta_\epsilon * u_\epsilon - v|^2f_\epsilon dxdv\leq C\|m_0 f_\epsilon\|_{\frac 32}^\frac{p}{p-2}.
\end{align*}
We apply Lemma \ref{mom.p2} and  uniform $L^1$ and $L^\infty$ bounds on $f_\epsilon$ given by (\ref{finfty.p2}) to get
\begin{align*}
\|m_0 f_\epsilon\|_\frac{3}{2}\leq C(M_\frac{3}{2}f_\epsilon)^\frac{2}{3} \leq C(M_2f_\epsilon)^\frac{1}{2}
\end{align*}
which yields
\begin{align*}
\frac{d}{dt}\Big(M_2f_\epsilon &+ \|u_\epsilon\|_2^2\Big) + c_1\kappa\|\nablax u_\epsilon\|_p^p + \int_{\tor^3\times\r^3}\gamma_\epsilon |\theta_\epsilon * u_\epsilon - v|^2f_\epsilon dxdv\leq C(M_2f_\epsilon)^\frac{p}{2p-4}.
\end{align*}
By the nonlinear version of Gronwall's lemma (Lemma \ref{gron.p2}) there exists $T^*\in(0,T)$ (with $T^*=T$ for $p\geq 4$) such that for $t\in[0,T^*]$, we have
\begin{align}\label{hmm}
\big(M_2f_\epsilon &+ \|u_\epsilon\|_2^2\big)(t) + c_1\kappa\int_0^t\|\nablax  u_\epsilon\|_p^p ds + \int_0^t\int_{\tor^3\times\r^3} |\theta_\epsilon * u_\epsilon - v|^2\gamma_\epsilon f_\epsilon dxdv ds \leq C(T).
\end{align}
This proves \eqref{malfa.p2} and \eqref{ualfa.p2}. It remains to prove \eqref{exalfa.p2}. We do this by using \eqref{hen}: we estimate
\begin{align*}
G_\epsilon = \int_{\r^3}(\theta_\epsilon*u_\epsilon-v)\gamma_\epsilon f_\epsilon dv
\end{align*}
in $L^2(0,T^*;L^2(\tor^3))$ by a combination of terms that are bounded thanks to the energy estimate \eqref{hmm} and terms that appear on the left-hand side of \eqref{hen}; then we move the bad terms to the 
left hand side of \eqref{hmm} and finish the estimation. We perform the computations only for $p<3$. For $p\geq 3$ slightly different argument is needed, but due to higher regularity of solutions, the proof is  in fact easier. 
We have
\begin{align*}
\int_0^{T^*}\|G_\epsilon\|_2^2dt\leq C\left(\int_0^{T^*}\int_{\tor^3}\left(\int_{\r^3}|\theta_\epsilon * u_\epsilon|\gamma_\epsilon f_\epsilon dv\right)^2dxdt + 
\int_0^{T^*}\int_{\tor^3}\left(\int_{\r^3}|v|\gamma_\epsilon f_\epsilon dv\right)^2dxdt\right) =: A + B.
\end{align*}
First we estimate $A$:

\begin{align}\label{push1}
A \leq \int_0^{T^*}\int_{\tor^3}|\theta_\epsilon*u_\epsilon|^2 (m_0f_\epsilon)^2dxdt
\leq \int_0^{T^*}\|u_\epsilon\|_\infty^2dt\cdot\sup_{t\leq {T^*}}\int_{\tor^3}(m_0f_\epsilon)^2dx.
\end{align}
Lemma \ref{mom.p2} implies that
\begin{align}\label{push2}
\int_{\tor^3}(m_0f_\epsilon)^2dx \leq \int_{\tor^3}(m_0f_\epsilon)^\frac{5}{3}dx\cdot \|m_0f_\epsilon\|_\infty^\frac{1}{3}\leq CM_2f_\epsilon \|f_\epsilon\|_\infty^\frac{1}{3}{\mathcal R}\stackrel{\eqref{finfty.p2},\eqref{hmm}}
{\leq}C(T^*){\mathcal R},
\end{align}
where ${\mathcal R}$ denotes the radius of the support of $f_\epsilon$ in $v$.
Moreover, the proof of Lemma \ref{nos.p2} implies that
\begin{align}\label{push3}
{\mathcal R}\leq Ce^{C{T^*}}\left(R + {T^*}\sqrt{M_2f_\epsilon} + \int_0^{T^*}\|u_\epsilon\|_\infty dt\right),
\end{align}
thus combining \eqref{push1} and \eqref{push2} with the energy estimate \eqref{hmm} we get
\begin{align}\label{push4}
A \leq C(T) \int_0^{T^*}\|u_\epsilon\|_\infty^2dt\left(\sqrt{M_2f_\epsilon} + \int_0^{T^*}\|u_\epsilon\|_\infty dt + 1\right)
\leq C(T)\left[\left(\int_0^{T^*}\|u_\epsilon\|_\infty^2dt\right)^\frac{3}{2}+1\right].
\end{align}
To estimate $\|u_\epsilon\|_\infty$, we use Gagliardo--Nirenberg inequality obtaining
\begin{align*}
\|u_\epsilon\|_\infty\leq C\Big(\|\nablax u_\epsilon\|_{3p}^\frac{3-p}{2} + \|u_\epsilon\|_2^{\frac{3-p}{p}}\Big) \|u_\epsilon\|_\frac{3p}{3-p}^\frac{p-1}{2} \\
\leq C\Big(\|\nablax u_\epsilon\|_{3p}^\frac{3-p}{2} + \|u_\epsilon\|_2^{\frac{3-p}{p}}\Big) \Big(\|\nablax u_\epsilon\|_p^\frac{p-1}{2} +\| u_\epsilon\|_2^\frac{p-1}{2}\Big),
\end{align*}
which implies that (note that $\frac{p}{3-p}>1$ for $p>\frac{3}{2}$)
\begin{align*}
\int_0^{T^*}\Big(\|\nablax u_\epsilon\|_{3p}^\frac{3-p}{2} + \|u_\epsilon\|_2^{\frac{3-p}{p}}\Big) \Big(\|\nablax u_\epsilon\|_p^\frac{p-1}{2} +\| u_\epsilon\|_2^\frac{p-1}{2}\Big) dt\\
\stackrel{H(\frac{p}{3-p})}{\leq}C\left(\int_0^{T^*}\|\nablax u_\epsilon\|_{3p}^pdt\right)^\frac{3-p}{p}\left(\int_0^{T^*}\|\nablax u_\epsilon\|_p^{p\cdot\frac{p-1}{2p-3}}dt\right)^\frac{2p-3}{p} + l.o.t.,
\end{align*}
where l.o.t. denotes lower order terms connected with the presence of $\|u_\epsilon\|_2$ which is bounded on $(0,{T^*})$. 
Therefore
\begin{align*}
A\leq C(T)\left(\int_0^{T^*}\|\nablax u_\epsilon\|_{3p}^pdt\right)^\frac{9-3p}{2p}\left(\int_0^{T^*}\|\nablax u_\epsilon\|_p^{p\cdot\frac{p-1}{2p-3}}dt\right)^\frac{6p-9}{2p} + l.o.t.
\end{align*}

We use the fact that $\frac{p-1}{2p-3}\leq 1$ for $p\geq 2$ and Young's ineqaulity with exponent $\frac{2p}{9-3p}$ (which is greater than 1 for $p>\frac{9}{5}$) to get for arbitrary $\eta>0$

\begin{align}\label{push5}
A\leq \eta\int_0^{T^*}\|\nablax u_\epsilon\|_{3p}^pdt + C(\eta)\left(\int_0^{T^*}\|\nablax u_\epsilon\|_p^pdt\right)^\frac{6p-9}{5p-9} + C.
\end{align}

On the other hand for $B$, we have by Lemma \ref{mom.p2} 
\begin{align*}
B\leq \int_0^{T^*}\int_{\tor^3}\left(m_1f_\epsilon\right)^\frac{5}{4}\cdot \left( m_1f_\epsilon \right)^\frac{3}{4}\leq TM_2f_\epsilon\|m_1f_\epsilon\|_\infty^\frac{3}{4}
\stackrel{\eqref{finfty.p2},\eqref{hmm}}{\leq} C(T){\mathcal R}^3. 
\end{align*}
We estimate ${\mathcal R}$ again using \eqref{push3} obtaining
\begin{align*}
B\leq C(T)\left[\left(\int_0^{T^*}\|u_\epsilon\|_\infty dt\right)^3+1\right]\leq C(T)\left[\left(\int_0^{T^*}\|u_\epsilon\|_\infty^2 dt\right)^\frac{3}{2}+1\right]
\end{align*}
and this is the estimate with exactly the same right-hand side as \eqref{push4}. Thus from this point we proceed like in the estimation of $A$ altogether getting
\begin{align}\label{push6}
\int_0^{T^*}\|G_\epsilon\|_2^2dt\leq A+B\leq \eta\int_0^{T^*}\|\nablax u_\epsilon\|_{3p}^pdt + C(\eta)\left[\left(\int_0^{T^*}\|\nablax u_\epsilon\|_p^pdt\right)^\frac{6p-9}{5p-9}  + 1\right]
\end{align}
for all $\eta>0$ and note that due to energy estimate \eqref{hmm} the second summand on the right-hand side is bounded on $[0,T^*]$. We apply this estimate to \eqref{hen} which after taking a sufficiently small $\eta$ 
enables us to move the term with $\|\nablax u_\epsilon\|_{3p}^p$ to the left-hand side which results in
\begin{align}\label{push7}
\sup_{t\leq {T^*}}\|u_\epsilon\|^2_{W^{1,2}(\tor^3)}+ \|\nablax^{(2)}u_\epsilon\|_{L^2(0,{T^*};L^2(\tor^3))}^2 +\frac{1}{2}\|\nablax u_\epsilon\|_{L^p(0,{T^*};L^{3p}(\tor^3))}^p\leq C,
\end{align}
where $C$ depends on $\|u_{0,\epsilon}\|_2^2, M_2f_{0,\epsilon}, R, \|f_\epsilon\|_\infty, T^*, T, p$, all of which are either fixed or bounded independently of $\epsilon$. Finally applying \eqref{push7} to \eqref{push6} 
finishes the proof of \eqref{exalfa.p2}.
\end{proof}

The following corollary combines all the necessary local in time uniform estimates of $u_{\epsilon}$ and $f_{\epsilon}$ proved throughout this section.

\begin{coro}\label{bounds.p2}
The solutions $(f_{\epsilon},u_{\epsilon})$ satisfy estimates $(i)-(iii)$ from Proposition \ref{estimun.p2} and all estimates from Proposition \ref{momprop.p2} uniformly with respect to $\epsilon>0$ on the time interval $[0,T^*]$. 
\end{coro}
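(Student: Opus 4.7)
The proof is essentially a bookkeeping exercise: the bounds stated in Proposition \ref{momprop.p2} were already derived with constants depending only on initial data, $T$, $p$, and the structural constants from \eqref{tau2.p2}--\eqref{tau4.p2}, so they are \emph{a priori} uniform in $\epsilon$. My plan is to revisit each of the bounds that nominally carry an $\epsilon$-dependence through $C(\epsilon)$ in Proposition \ref{estimun.p2}, and to verify that once one substitutes the uniform bounds of Proposition \ref{momprop.p2} into the arguments, the $\epsilon$-dependence drops out.

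For item $(iii)$ of Proposition \ref{estimun.p2}, the $L^1$-part follows from the conservation of mass $M_0 f_\epsilon=1$, and the $L^\infty$-part from the representation \eqref{fff}, namely $\tilde f(t,x_0,v_0)=e^{3\int_0^t (b+B)\,ds}f_{0,\epsilon}(x_0,v_0)$. There $b\leq c_6$ is trivially uniform, and the proof of Lemma \ref{nos.p2} already shows $\|B(t)\|_\infty\leq C(\|u_\epsilon(t)\|_{L^2}+C)$; combining this with \eqref{ualfa.p2} makes the exponent bounded independently of $\epsilon$. For item $(i)$, the norm in $\mathcal H$ decomposes into: the $L^\infty L^2\cap L^p\dot W^{1,p}$-part, which is exactly \eqref{ualfa.p2}; and the $L^\infty W^{1,2}\cap L^2 W^{2,2}\cap L^p \dot W^{1,3p}$-part, which is exactly \eqref{push7}—whose right-hand side, as noted explicitly in the text following \eqref{push7}, depends only on $\|u_{0,\epsilon}\|_2$, $M_2 f_{0,\epsilon}$, $R$, $\|f_\epsilon\|_\infty$, $T^*$, $T$, $p$, all of which are fixed or $\epsilon$-independent.

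For item $(ii)$, I would test $(\ref{regul.p2})_2$ with $\partial_t u_\epsilon$. Using \eqref{tau4.p2} one can write $\int_{\tor^3}\tau(Du_\epsilon):D\partial_t u_\epsilon\,dx=\frac{d}{dt}\int_{\tor^3}\Phi(Du_\epsilon)\,dx$ for a convex potential $\Phi$ satisfying $\Phi(\xi)\leq C(|\xi|^p+|\xi|^2)$, yielding
\begin{align*}
\|\partial_t u_\epsilon\|_2^2+\frac{d}{dt}\int_{\tor^3}\Phi(Du_\epsilon)\,dx\leq \bigl|\int_{\tor^3}(u_\epsilon\cdot\nablax)u_\epsilon\cdot\partial_t u_\epsilon\,dx\bigr|+\bigl|\int_{\tor^3} G_\epsilon\cdot\partial_t u_\epsilon\,dx\bigr|.
\end{align*}
After absorbing $\partial_t u_\epsilon$ from the right by Young, the forcing term is controlled by $\|G_\epsilon\|_{L^2 L^2}$, uniformly bounded by \eqref{exalfa.p2}. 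The convective term is estimated by $\|u_\epsilon\|_\infty^2\|\nablax u_\epsilon\|_2^2$, and the Gagliardo--Nirenberg manipulation already carried out in the proof of Proposition \ref{momprop.p2} (using \eqref{push7}) renders it integrable in time with an $\epsilon$-independent bound. Time integration from $0$ to $T^*$ and using \eqref{push7} to bound $\int\Phi(Du_\epsilon)\,dx$ at the endpoint then yields $\|\partial_t u_\epsilon\|_{L^2(0,T^*;L^2)}\leq C$ uniformly.

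The only subtle point that I would keep an eye on is the genuine $\epsilon$-independence of the $L^\infty$-bound on $f_\epsilon$: the representation \eqref{fff} involves $\theta_\epsilon*u_\epsilon$ and $\nabla_v\gamma_\epsilon$, each of which individually has unfavorable $\epsilon$-scaling ($\|\theta_\epsilon*u_\epsilon\|_\infty\lesssim \epsilon^{-1/2}\|u_\epsilon\|_2$ and $\|\nabla_v\gamma_\epsilon\|_\infty\sim \epsilon$). The cancellation giving the $O(\epsilon^{1/2}\|u_\epsilon\|_2)$ bound on their product—precisely the computation done just before \eqref{fff}—is the one place where $\epsilon$-uniformity of the whole scheme is genuinely delicate; once that step is checked, everything else is a direct synthesis of estimates already in hand.
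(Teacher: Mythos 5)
Your argument follows essentially the same route as the paper: the key observation is that Proposition~\ref{pomoc1.p2} yields bounds for $u$ in ${\mathcal H}$ and for $\partial_t u$ in $L^2L^2$ whose constants depend only on $\|u_0\|_{W^{1,2}}$ and $\|G\|_{L^2L^2}$, and that $\|G_\epsilon\|_{L^2L^2}$ is $\epsilon$-uniformly bounded by \eqref{exalfa.p2}; combined with the already $\epsilon$-uniform $L^1\cap L^\infty$ bound on $f_\epsilon$ from \eqref{fff}, this gives everything. The only minor difference is one of emphasis: for $(ii)$ you re-derive the $\partial_t u_\epsilon$ estimate by testing with $\partial_t u_\epsilon$ and using a potential $\Phi$, whereas the paper simply invokes Proposition~\ref{pomoc1.p2} again; and the paper additionally records explicitly that the support radius ${\mathcal R}_\epsilon$ in \eqref{supest.p2} is uniform in $\epsilon$ (via Lemmas~\ref{nos.p2}--\ref{lemr.p2} together with \eqref{malfa.p2}--\eqref{ualfa.p2}), a point you leave implicit but which is worth stating since it feeds into the passage to the limit.
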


\begin{proof}
To prove estimate $(i)$ we notice that by Proposition \ref{pomocu.p2} the $\|\cdot\|_{{\mathcal H}}$ norm of $u^\epsilon_n$ depends only on $\|u_0\|_{W^{1,2}(\tor^3)}$, which is fixed and on $\|G\|_{L^2(0,T;L^2(\tor^3))}$ which by 
Proposition \ref{momprop.p2} is uniformly bounded with respect to  $\epsilon$ on the time interval $[0,T^*]$. Therefore also $\|u^\epsilon\|_{{\mathcal H}}$ is uniformly bounded on $[0,T^*]$. The exactly same argument is valid 
for the $\epsilon$-independent estimate $(ii)$. Estimate $(iii)$ was already proved to be $\epsilon$-independent.
It remains to show that $f_\epsilon$ satisfies $(\ref{supest.p2})$ with  ${\mathcal R}$ independent of $\epsilon$. By Lemmas \ref{nos.p2} and \ref{lemr.p2} each iterative solution $f_\epsilon$ has a support contained in a ball of 
radius ${\mathcal R}_\epsilon$ with ${\mathcal R}_\epsilon(t)$ depending on $\|u_\epsilon\|_{L^2(0,T;W^{2,2}(\tor^3))}$ and $\|M_1f_\epsilon\|_\infty$ (and $R$ which depends only on the initial data). However, by Proposition \ref{momprop.p2},
these quantities are uniformly bounded on $[0,T^*]$ thus so is ${\mathcal R}_\epsilon$. 
 \end{proof}

\begin{proof}[Proof of Theorem \ref{exiful.p2} -- local existence]
With the uniform bounds from Corollary \ref{bounds.p2}, it remains  to let $\epsilon$ to $0$ and to show the compactness of the set $\{(f_\epsilon,u_\epsilon)\}_{\epsilon>0}$ in appropriate spaces and that the limits of $(f_\epsilon, u_\epsilon)$ solve \eqref{sys} in the sense of Definition \ref{sol.p2}.

By virtue of the previously proved uniform bounds it follows that $u_\epsilon$ is uniformly bounded in ${\mathcal H}\hookrightarrow L^2(0,T;W^{2,2}(\tor^3))$ and
 $\partial_t u_\epsilon$ is uniformly bounded in $L^2(0,T;L^2(\tor^3))$. Since it holds 
$
W^{2,2}(\tor^3)\hookrightarrow\hookrightarrow W^{1,2}(\tor^3)\hookrightarrow L^2(\tor^3),
$
by Aubin--Lions lemma, we may extract from $u_\epsilon$ a strongly convergent subsequence in $L^2(0,T;W^{1.2}(\tor^3))$. Thus up to a subsequence
\begin{align}\label{ul2.p2}
u_\epsilon\to u \ \ \ {\rm in}\ L^2(0,T;L^2(\tor^3))
\end{align}
for some $u\in L^2(0,T;L^2(\tor^3))$ and
\begin{align}\label{nabu.p2}
\nablax  u_\epsilon \to \nablax  u \ \ \ {\rm in}\ L^2(0,T; L^2(\tor^3)).
\end{align}

On the other hand, the compactness of $f_\epsilon$ follows from bound the $L^\infty$ bound ($(iii)$ from
Proposition \ref{estimun.p2}) and Banach-Alaoglu Theorem. Then, up to a subsequence, $f_\epsilon\to f$ weakly * in $L^\infty([0,T]\times\tor^3\times\r^3)$. 

To finish the proof we need to show that $(f,u)$ satisfies $(\ref{regul.p2})$ in the sense of Definition \ref{sol.p2}. By (\ref{ul2.p2})
\begin{align}\label{dtu}
\int_0^T\int_{\tor^3} - u_\epsilon\cdot \partial_t\phi dxdvdt \to \int_0^T\int_{\tor^3} - u\cdot \partial_t\phi dxdvdt,
\end{align}
for all divergence free smooth $\phi$ with compact support in $t$. Thus $\partial_t u_\epsilon\to \partial_t u$ in the distributional sense, where $\partial_t u$ is the distributional derivative of $u$. However,
since $\partial_t u_\epsilon$ is bounded in $L^2(0,T;L^2(\tor^3))$, it  actually implies that $\partial_t u_\epsilon\to \partial_t u$ weakly in $L^2(0,T;L^2(\tor^3))$. 

By (\ref{ul2.p2}) and (\ref{nabu.p2}) $u_\epsilon\to u$ and $\nablax u_\epsilon\to\nablax u$ a.e. (up to a subsequence), which implies that also the convective term $(u_\epsilon\cdot\nablax)u_\epsilon\to(u\cdot\nablax)u$ a.e. Moreover, for
a sufficiently small $\eta>0$ we have
\begin{align*}
\|(u_\epsilon\cdot\nablax)u_\epsilon\|_{L^{1+\eta}(0,T;L^{1+\eta}(\tor^3))}\leq \|u\|_{L^\infty(0,T;L^2(\tor^3))}\|\nablax u\|_{L^p((0,T)\times \Omega)}\leq \|u\|_{\mathcal H}^2\ ,
\end{align*}
which means that $(u_\epsilon\cdot\nablax)u_\epsilon$ is uniformly bounded in $L^{1+\eta}$ and thus it is uniformly integrable. By Vitali's convergence theorem 
\begin{align}\label{convu}
\int_0^T\int_{\tor^3}(u_\epsilon\cdot\nablax)u_\epsilon\cdot \phi dxdvdt \to\int_0^T\int_{\tor^3}(u\cdot\nablax)u\cdot\phi dxdvdt,
\end{align}
for all divergence free smooth $\phi$ with compact support in $(0,T)$.

Similarly, up to a subsequence $\tau(Du_\epsilon)\to\tau(Du)$ a.e. and by (\ref{tau2.p2}) it is bounded in $L^{p^{'}}(0,T;L^{p^{'}}(\tor^3))$. Vitali's convergence theorem implies that $\tau(Du_\epsilon)\to\tau(Du)$ 
strongly in $L^{p^{'}-\eta}(0,T;L^{p^{'}-\eta})$ for some $\eta>0$. On the other hand, by Banach--Alaoglu Theorem the  sequence $\{\tau(Du_\epsilon)\}_{\epsilon>0}$ converges weakly in $L^{p^{'}}(0,T;L^{p^{'}}(\tor^3))$ to 
$\tau(Du)$ and by weak sequential lower semicontinuity of the norm $\tau(Du)\in L^{p^{'}}(0,T;L^{p^{'}}(\tor^3))$. Whence
\begin{align}\label{stressu}
\int_0^T\int_{\tor^3} \tau(Du_\epsilon):D\phi dxdvdt \to \int_0^T\int_{\tor^3} \tau(Du):D\phi dxdvdt
\end{align}
for all divergence free smooth $\phi\in C^\infty$ with compact support in $t$. Convergence and boundedness of the external force follows by similar arguments. Altogether, (\ref{dtu})--(\ref{stressu}) imply that for all 
divergence free smooth $\phi$ with compact support in the time variable we have
\begin{align}\label{slabo.p2}
\int_0^T\int_{\tor^3} \big(- u\cdot \partial_t\phi + (u\cdot\nablax)u\cdot \phi  + \tau(Du):D\phi\big) dxdvdt = -\int_0^T\int_{\tor^3}\int_{\r^3}(u-v)\cdot\phi fdxdvdt.
\end{align}

As $u$ is a limit of $u_\epsilon$ we also have
\begin{align*}
u\in L^\infty(0,T;\dot{W}^{1,p}(\tor^3))\cap L^\infty(0,T;W^{1,2}(\tor^3))\cap L^2(0,T;W^{2,2}(\tor^3))
\end{align*}
and since $\partial_t u\in L^2(0,T;L^2(\tor^3))$, the well-known result on the Gelfand triplet allows us to conclude that $u\in C([0,T];L^2(\tor^3))$ and thus $u\in{\mathcal H}$.

Finally, due to the sufficient regularity of $u$ we may replace (\ref{slabo.p2}) by equation from point 4 of Definition \ref{sol.p2} and by a density argument extend the class of admissible test functions
to $W^{1,2}(\tor^3)\cap \dot{W}^{1,p}_{div}(\tor^3)$.

As for the particle part of the solution $f$, thanks to the regularising effect of $\psi$ in $\FCS(f)$ and of sufficient regularity of $u$, converging with each term of the weak formulation for $f_\epsilon$ is straightforward.  
\end{proof}

\subsection{Global existence}\label{glob}

This part is dedicated to show that $T^*=T$. The previous subsection gave the local existence for the original problem. To show the existence on the whole time interval $[0,T]$
is sufficient to construct a priori estimate controlling traces in time which allow to prolong the solution till time $T$. As $\theta_\epsilon u_\epsilon \to u_\epsilon$ in the regularity class, where $u_\epsilon$ is bounded, it is not 
difficult to show that estimate \eqref{1} takes the form
\begin{align*}
\big(M_2f &+ \|u\|_2^2\big)(t) + c_1\kappa\int_0^t\|\nablax  u\|_p^p ds + \int_0^t\int_{\tor^3\times\r^3} |u - v|^2f dxdvds 
\leq \big(M_2f + \|u\|_2^2\big)(0).
\end{align*}
Note that the right-hand side depends only on the initial data of our problem. We were not able to use this argument previously and this led to the necessity to prove the local existence first.

Then in order to apply Proposition \ref{pomoc1.p2} we estimate the drag force
\begin{align*}
G=\int_{\r^3}(u-v)fdv
\end{align*}
in the same way as we estimated $G_\epsilon$ in the proof of Proposition \ref{momprop.p2}. So we find the better integrability of $u$. Hence for all $t\in [0,T^*)$ 
we are able to construct the a priori estimate without dependence from $T^*$, guaranteeing that, by continuity $u(T^*) \in W^{2,2}(\tor^3)$, $f(T^*)\in (L^1\cap L^\infty\cap W^{1,2})(\tor^3 \times \r^3)$. Hence we are able to 
prolong the solution over $T^*$. We proved $T^*=T$, hence the solution exists in fact in the whole time interval $(0,T)$.

%
%
%
%

%
%
%
%

\section{Large-time behavior of the solutions}
To prove Theorem \ref{t-a} we apply the strategy from \cite{bae2}, where the proof follows directly from the following lemma.
\begin{lem}\label{t-a-l}
Recall ${\mathcal E}$ is defined in \eqref{e}. Let $(f,u)$ be any solution of system \eqref{sys} in the sense of Definition \ref{sol.p2}. Then we have
\begin{enumerate}[(i)]
\item $\frac{d{\mathcal E}_p}{dt}\leq -2\psi(\sqrt{2}){\mathcal E}_p + 2\int_{\tor^3\times\r^3}(v-v_c)\cdot(u-v)fdxdv$,
\item $\frac{d{\mathcal E}_f}{dt}\leq -2c_1\kappa\varpi{\mathcal E}_f + 2\int_{\tor^3\times\r^3}(u_c-u)\cdot (u-v)fdxdv$,
\item $\frac{d{\mathcal E}_d}{dt}\leq -4\int_{\tor^3\times\r^3}(u_c-v_c)\cdot(u-v)fdxdv$.
\end{enumerate}
\end{lem}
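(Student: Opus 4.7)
The overall plan is to differentiate each of the three functionals in time and then test the appropriate equation of \eqref{sys} with a well-chosen weight. Three organising principles guide the computation: (a) the defining identities $\int(v-v_c)fdxdv = 0$ and $\int(u-u_c)dx = 0$ immediately annihilate the contributions from $\dot v_c$ and $\dot u_c$; (b) the $F_{CS}$ interaction is antisymmetrised by the swap $(x,v)\leftrightarrow(y,w)$; (c) the coercivity of $\tau$ from \eqref{tau2.p2} combined with Korn's and Poincaré's inequalities yields the dissipative bound for $\mathcal{E}_f$. Throughout, Definition \ref{sol.p2} and Remark \ref{dupa.p2} are invoked to justify using polynomial-in-$v$ weights as admissible test functions in $\eqref{sys}_1$ and $u-u_c$ as a test function in $\eqref{sys}_2$.

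For (i), differentiating $\mathcal{E}_p$ the $\dot v_c$ cross-term vanishes, and plugging in $\eqref{sys}_1$ (with integration by parts in $x$ killing the transport term, since $|v-v_c|^2$ is $x$-independent) leaves $2\int(v-v_c)\cdot F_{CS}(f)f\,dxdv + 2\int(v-v_c)\cdot(u-v)f\,dxdv$. The second term is already the inhomogeneous part of (i). For the first, I rewrite it as a double integral against $\psi(|x-y|)f(x,v)f(y,w)$, apply the $(x,v)\leftrightarrow(y,w)$ swap and average, obtaining $-\int\psi(|x-y|)|v-w|^2 f(x,v)f(y,w)\,dxdvdydw$. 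Monotonicity of $\psi$ together with the diameter bound $|x-y|\leq\sqrt 2$ on $\tor^3$ yields $\psi(|x-y|)\geq \psi(\sqrt 2)$, and expanding $|v-w|^2 = |v-v_c|^2+|w-v_c|^2-2(v-v_c)\cdot(w-v_c)$ kills the cross term and gives $\iint|v-w|^2 ff = 2\mathcal{E}_p$, producing exactly $-2\psi(\sqrt 2)\mathcal{E}_p$.

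For (ii), differentiating $\mathcal{E}_f$ again annihilates the $\dot u_c$ piece, and testing $\eqref{sys}_2$ with $u-u_c$ kills the pressure by $\di u=0$ and the convective term by integration by parts (using that $u_c$ is $x$-constant and $u$ is divergence-free). The stress term becomes $-2\int\tau(Du):Du\,dx\leq -2c_1\|Du\|_2^2$ by \eqref{tau2.p2}; Korn's inequality $\|Du\|_2^2\geq \kappa\|\nabla u\|_2^2$ and Poincaré's inequality $\|\nabla u\|_2^2\geq \varpi\|u-u_c\|_2^2$ for the zero-mean function $u-u_c$ deliver the bound $-2c_1\kappa\varpi\mathcal{E}_f$, while the coupling term produces precisely $2\int(u_c-u)\cdot(u-v)f\,dxdv$. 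Part (iii) is immediate from the computations in the remark around \eqref{cm1}--\eqref{cm2}: there $\dot u_c = -\int(u-v)f\,dxdv$ and $\dot v_c = +\int(u-v)f\,dxdv$ are established, so $\frac{d}{dt}|u_c-v_c|^2 = 2(u_c-v_c)\cdot(\dot u_c - \dot v_c) = -4\int(u_c-v_c)\cdot(u-v)f\,dxdv$, which is (iii) as an equality.

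The main obstacle I expect is the rigorous justification of the computation in (i): the weight $|v-v_c(t)|^2$ is neither bounded nor compactly supported in $v$, hence not immediately covered by Definition \ref{sol.p2}(iii). This is handled, following the hint in Remark \ref{dupa.p2}, by exploiting that $M_2 f\in L^\infty([0,T])$ and that on the approximate level $f_\epsilon$ is compactly supported in $v$ uniformly on $[0,T^*]$; a truncation of the weight together with a density argument then transfers the identity to the limit. A minor collateral point is verifying that $\sqrt 2$ is indeed an upper bound for $|x-y|$ under the normalisation of $\tor^3$ adopted in the paper. Everything else reduces to bookkeeping of signs and standard integration by parts.
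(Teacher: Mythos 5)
Your proof is correct and follows the same approach as the paper. The paper itself delegates parts (i) and (iii) to the cited reference \cite{bae2} and writes out only (ii) in detail (testing $\eqref{sys}_2$ with $u-u_c$, killing pressure and convection via $\di u=0$, and chaining \eqref{tau2.p2} with Korn and Poincar\'e); your treatment of (ii) reproduces that argument, and your (i) via the $(x,v)\leftrightarrow(y,w)$ symmetrization with $\psi\geq\psi(\sqrt2)$ and $\iint|v-w|^2ff=2\mathcal{E}_p$, and your (iii) via $\dot u_c=-\int(u-v)f$, $\dot v_c=+\int(u-v)f$, are precisely the computations the paper imports from \cite{bae2}. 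The admissibility concern about the weight $|v-v_c|^2$ is the one Remark \ref{dupa.p2} is meant to address, as you note.
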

\begin{proof}
The proof can be found in \cite{bae2} Lemma 4.1. The only slightly different part is the proof of $(ii)$ which we present below. We test $\eqref{sys}_2$ with $u-u_c$ (which is an admissible test function) obtaining
\begin{align*}
\int_{\tor^3}\partial_t u\cdot (u-u_c) dx + \int_{\tor^3}(u\cdot\nablax) u\cdot(u-u_c)dx + \int_{\tor^3}\tau(Du):Dudx = -\int_{\tor^3\times\r^3}(u-v)\cdot(u-u_c)fdxdv. 
\end{align*}
Since $\di u = 0$, the convective term dissapears, i.e.
\begin{align*}
\int_{\tor^3}(u\cdot\nablax) u\cdot(u-u_c)dx = 0.
\end{align*}
Moreover, by \eqref{tau2.p2} and  Korn's and Poincar\' e's inequalities we have
\begin{align*}
\int_{\tor^3}\tau(Du):Dudx\geq c_1\kappa\|\nablax u\|_2^2\geq c_1\kappa\varpi\int_{\tor^3}|u-u_c|^2dx = c_1\kappa\varpi {\mathcal E}_f.
\end{align*}
Finally we note that
\begin{align*}
\int_{\tor^3}\partial_t u\cdot (u-u_c) dx = \int_{\tor^3}\partial_t u\cdot (u-u_c) dx - \frac{d}{dt}u_c\cdot\int_{\tor^3}(u-u_c)dx = \frac{1}{2}\frac{d{\mathcal E}_f}{dt}
\end{align*}
and combine the above estimates to get
\begin{align*}
\frac{d{\mathcal E}}{dt}\leq -2c_1\kappa\varpi{\mathcal E}_f + 2\int_{\tor^3\times\r^3}(u-v)\cdot(u_c-u)fdxdv.
\end{align*}
\end{proof}

\begin{proof}[Proof of Theorem \ref{t-a}]
Fix $T>0$. Since 
$${\mathcal E} = 2{\mathcal E}_p + 2{\mathcal E}_f + {\mathcal E}_d,$$ it follows from Lemma \ref{t-a-l} that
\begin{align}\label{siic1}
\frac{d{\mathcal E}}{dt}\leq -4\psi(\sqrt{2}){\mathcal E}_p - 4c_1\kappa\varpi{\mathcal E}_f -4\int_{\tor^3\times\r^3}|u-v|^2fdxdv
\end{align}
and it proves that ${\mathcal E}$ is nonincreasing.
In order to prove \eqref{exp}, we aim to apply Gronwall's inequality to \eqref{siic1}, hence we need a term containing ${\mathcal E}_d$ on the right-hand side, which we extract from $-4\int_{\tor^3\times\r^3}|u-v|^2dxdv$. We have
\begin{align*}
\int_{\tor^3\times\r^3}|u-v|^2fdxdv = \int_{\tor^3\times\r^3}|u-u_c+u_c-v_c+v_c-v|^2fdxdv\\
=\int_{\tor^3}|u-u_c|^2m_0fdx + |u_c-v_c|^2 + \int_{\tor^3\times\r^3}|v-v_c|^2fdxdv\\
+ 2\int_{\tor^3\times\r^3}(u-u_c)\cdot(u_c-v_c)fdxdv + 2\int_{\tor^3\times\r^3}(u_c-v_c)\cdot(v_c-v)fdxdv\\ + 2\int_{\tor^3\times\r^3}(u-u_c)\cdot(v_c-v)fdxdv.
\end{align*}
The middle term on the right-hand side of the above equation is equal to $0$, i.e.
\begin{align*}
\int_{\tor^3\times\r^3}(u_c-v_c)\cdot(v_c-v)fdxdv = (u_c-v_c)\cdot\int_{\tor^3\times\r^3}(v_c-v)fdxdv = 0.
\end{align*}

The remaining terms can be estimated in the following way using Young's inequality
\begin{align*}
\left|2\int_{\tor^3\times\r^3}(u-u_c)\cdot(u_c-v)fdxdv\right|\leq \delta\int_{\tor^3}|u-u_c|^2m_0fdx + \frac{1}{\delta}\int_{\tor^3\times\r^3}|u_c-v|^2fdxdv\\
=\delta\int_{\tor^3}|u-u_c|^2m_0fdx + \frac{1}{\delta}\left(|u_c-v_c|^2 + \int_{\tor^3\times\r^3}|v_c-v|^2fdxdv\right)\\ = \delta\int_{\tor^3}|u-u_c|^2m_0fdx + \frac{1}{\delta}\left({\mathcal E}_d + {\mathcal E}_p\right).
\end{align*}

Altogether, we end up with the following estimate
\begin{align*}
\int_{\tor^3\times\r^3}|u-v|^2fdxdv\geq \left(1-\delta\right) \int_{\tor^3}m_0f|u-u_c|^2dx + \left(1-\frac{1}{\delta}\right)\left({\mathcal E}_d + {\mathcal E}_p\right),\quad \delta>0
\end{align*}
which together with \eqref{siic1} leads to
\begin{align*}
\frac{d{\mathcal E}}{dt} \leq -4\left(\psi(\sqrt{2})+1-\frac{1}{\delta}\right){\mathcal E}_p - 4c_1\kappa\varpi{\mathcal E}_f - 4\left(1-\frac{1}{\delta}\right){\mathcal E}_d\\
 - 4\left(1-\delta\right) \int_{\tor^3}m_0f|u-u_c|^2dx.
\end{align*}
Thus, by
\begin{align*}
\int_{\tor^3}m_0f|u-u_c|^2dx\leq \|m_0f\|_\infty{\mathcal E}_f,
\end{align*}
for $\delta=1+\eta$, we have

\begin{align}\label{siic2}
\frac{d{\mathcal E}}{dt} \leq -2\left(\psi(\sqrt{2})+1-\frac{1}{1+\eta}\right)\left(2{\mathcal E}_p\right) - 2\left(c_1\kappa\varpi-\eta\|m_0f\|_\infty\right){2\mathcal E}_f - 4\left(1-\frac{1}{1+\eta}\right){\mathcal E}_d.
\end{align}
Note that from the boundedness of the support in $v$ of $f$, for any $T>0$ we have $\sup_{t\leq T}\|m_0f\|_\infty<\infty$, thus after fixing
$$\eta:= \frac{c_1\kappa\varpi}{2\sup_{t\leq T}\|m_0f\|_\infty}>0$$
from \eqref{siic2} and Gronwall's inequality we deduce that
\begin{align}\label{argh}
{\mathcal E}(t)\leq {\mathcal E}(0)e^{-\gamma t}, \quad t\in[0,T),
\end{align}
where $\gamma:=\min\{2\psi(\sqrt{2})+\frac{2\eta}{1+\eta}, c_1\kappa\varpi, \frac{4\eta}{1+\eta}\}$. Finally, with an additional assumption that $\sup_{t\leq \infty}\|m_0f\|_\infty<\infty$ we can 
take $T=\infty$ in \eqref{argh} and in the definition of $\eta$.

In the case of $p>3$ one can use the advantage of the imbedding theorem. Esitmate (\ref{argh}) shows that $\mathcal{E}(t)$ is indeed a Lyapunov functional, it must decrease for all time. Now, taking the energy estimate \eqref{ee} we know that
\begin{equation}
 \int_0^T \Big( \int_{\tor^3} |\nablax u|^p dx + \int_{\tor^3}\int_{\r^3} |u-v|^2 f dxdv\Big)dt \leq bdd.
\end{equation}
with the  right-hand side independent of $T$. It means that there exists a sequence $t_n \to \infty$ (increasing) such that
\begin{equation}
 \int_{\tor^3} |\nablax u(t_n)|^p dx + \int_{\tor^3}\int_{\r^3} |u(t_n)-v|^2 f(t_n) dxdv \to 0 \mbox{ as } n \to \infty.
\end{equation}
The Sobolev inequality yields
\begin{equation}
 \|u(t_n)-u_c(t_n)\|_{L^\infty}\to 0 \mbox{ and as a consequence } \mathcal{E}_f(t_n) \to 0.
\end{equation}
Next, we note that
\begin{equation}
 \int_{\tor^3}\int_{\r^3} |v-u_c(t_n)|^2f(t_n)dxdv = \int_{\tor^3}\int_{\r^3} \Big((u_c^2 + v^2)f - 2 u_c\cdot v f\Big)(t_n)dxdv.
\end{equation}
But the conservation of momentum \eqref{cm} states that
\begin{equation}
 \int_{\tor^3} \big(u + \int_{\r^3} v f dv\big)dx =0
\end{equation}
for a.a. $t$. So
\begin{equation}
 \int_{\tor^3}\int_{\r^3} |v-u_c(t_n)|^2f(t_n)dxdv = \int_{\tor^3}\int_{\r^3} \Big((u_c^2 + v^2)f\Big)(t_n)dxdv + 2 u_c^2(t_n).
\end{equation}
It means that 
\begin{equation}
 u_c(t_n)\to 0 \quad \mbox{ and }\quad \int_{\tor^3} \int_{\r^3} v^2f(t_n) \to 0.
 \end{equation}
This implies that $v_c(t_n) \to 0$. Hence we proved that $\mathcal{E}(t_n)\to 0$, so by the monotonicity of $\mathcal{E}$ we get
\begin{equation}
 \mathcal{E}(t)\to 0
\end{equation}
for a.a. $t\to\infty$.

\end{proof}

{\bf Acknowledgments.} The work of the first author has been partly supported by Polish NCN grant No  2014/13/B/ST1/03094. The work of the second author was supported by the Polish NCN grant PRELUDIUM 2013/09/N/ST1/04113.  The work of the third author was supported by the Czech Science Foundation (GA\v{C}R; grant no. 16-03230S).
%
%
%
%

\appendix
\section{Appendix A}\label{appb}
We present the basic tools used throughout the paper.

We present two crucial lemmas from \cite{org}.

\begin{lem}\label{mom.p2}
Let $\beta>0$ and $g$ be a nonnegative function in $L^\infty([0,T]\times\tor^3 \times\r^3)$. The following estimate holds for any $\alpha<\beta$:
\begin{align*}
m_\alpha g(t,x)\leq \left(\frac{4}{3}\pi\|g(t,x,\cdot)\|_\infty + 1\right)m_\beta g(t,x)^\frac{\alpha+3}{\beta+3},
\end{align*}
for a.a. $(t,x)$.
\end{lem}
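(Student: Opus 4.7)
\textbf{Proof plan for Lemma \ref{mom.p2}.} The plan is a standard truncation-and-optimize argument: split the $v$-integral at a radius $R$ to be chosen, control the inner part by $\|g\|_\infty$ times the volume of a ball, and control the outer part by trading a power of $|v|$ against $R$ to produce $m_\beta g$. First I would write, for any $R>0$,
\begin{align*}
m_\alpha g(t,x) = \int_{B(R)} |v|^\alpha g(t,x,v)\,dv + \int_{\r^3\setminus B(R)} |v|^\alpha g(t,x,v)\,dv.
\end{align*}
The first integral is estimated by $\|g(t,x,\cdot)\|_\infty \int_{B(R)} |v|^\alpha\,dv = \frac{4\pi}{\alpha+3}\|g(t,x,\cdot)\|_\infty R^{\alpha+3}$, computed in spherical coordinates. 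For the second, since $\alpha<\beta$ and $|v|\ge R$ we have $|v|^{\alpha-\beta}\le R^{\alpha-\beta}$, hence $\int_{\r^3\setminus B(R)}|v|^\alpha g\,dv \le R^{\alpha-\beta}\int_{\r^3\setminus B(R)}|v|^\beta g\,dv \le R^{\alpha-\beta}m_\beta g(t,x)$.

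Next I would pick $R$ so that the two bounds scale identically with $m_\beta g$, namely $R=\big(m_\beta g(t,x)\big)^{1/(\beta+3)}$ (if $m_\beta g(t,x)=0$ then $g(t,x,\cdot)\equiv 0$ a.e.\ in $v$, so both sides vanish and there is nothing to prove). With this choice both $R^{\alpha+3}$ and $R^{\alpha-\beta}m_\beta g$ equal $m_\beta g^{(\alpha+3)/(\beta+3)}$, yielding
\begin{align*}
m_\alpha g(t,x) \le \left(\frac{4\pi}{\alpha+3}\|g(t,x,\cdot)\|_\infty + 1\right) m_\beta g(t,x)^{\frac{\alpha+3}{\beta+3}}.
\end{align*}
Finally, since moments are used in the paper only for $\alpha\ge 0$, the coefficient $\frac{4\pi}{\alpha+3}$ is bounded by $\frac{4}{3}\pi$, which gives the stated inequality.

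I do not expect any serious obstacle; the only point to watch is the degenerate case $m_\beta g(t,x)=0$ (handled trivially) and remembering that $\alpha\ge 0$ is needed to replace $\frac{4\pi}{\alpha+3}$ by the dimensional constant $\frac{4}{3}\pi$. The argument is pointwise in $(t,x)$, so the almost-everywhere qualifier is inherited from the $L^\infty$ representative of $g$.
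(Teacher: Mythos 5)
Your proof is correct and is the standard truncation-and-optimize argument for this type of moment interpolation inequality; the paper itself does not give a proof but cites it to reference \cite{org}, where essentially the same splitting at a radius $R$ followed by balancing the two contributions is used. One small point you rightly flagged: the lemma as stated allows ``any $\alpha<\beta$,'' but the coefficient $\frac{4\pi}{\alpha+3}$ you obtain is only dominated by $\frac{4}{3}\pi$ when $\alpha\ge 0$ (and the ball integral only converges for $\alpha>-3$); since the paper applies the lemma exclusively with $\alpha\ge 0$, this is harmless, but strictly speaking the statement should carry the restriction $\alpha\ge 0$ for the displayed constant to be correct.
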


\begin{proof}
The proof can be found in \cite{org}, page 9 (Lemma 1).
\end{proof}

We include the formulation of the classical Gronwall's lemma  with it's less popular non-linear varieties.

\begin{lem}[Gronwall's lemma]\label{gron.p2}
Let $f$ be a nonnegative function satisfying inequality
\begin{align*}
f(t)\leq c + \int_{t_0}^t(a(s)f(s) + b(s)f^q(s))ds,\ c\geq 0,\ q>1,
\end{align*}
where $a$ and $b$ are nonnegative, integrable functions for $t\geq t_0$. Then we have
\begin{align*}
f(t)\leq c\left[e^{(1-q)\int_{t_0}^ta(s)ds}-c^{-1}(q-1)\int_{t_0}^tb(s)e^{(1-q)\int_s^ta(r)dr}ds\right]^\frac{1}{q-1}
\end{align*}
for $t\in[t_0,h]$ for $h>0$ provided that
\begin{align*}
c<\left[e^{(1-q)\int_{t_0}^{t_0+h}a(s)ds}\right]^\frac{1}{q-1}\left[(q-1)\int_{t_0}^{t_0+h}b(s)ds\right]^\frac{1}{1-q}.
\end{align*}
\end{lem}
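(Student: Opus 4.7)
The plan is to reduce the nonlinear integral inequality to a Bernoulli-type ordinary differential inequality and then solve it with the standard integrating factor. Throughout, the delicate point is that $1-q<0$, so every nonlinear manipulation flips inequality signs and one must track this carefully. As the first step I introduce the majorant $F(t) := c + \int_{t_0}^t (a(s)f(s) + b(s)f^q(s))\,ds$, so that $f \leq F$ by hypothesis and $F(t_0)=c$. Assuming $c>0$ (the borderline case $c=0$ is handled by sending $c_n \searrow 0$ and using the monotone dependence of the bound on $c$), $F$ is absolutely continuous, nondecreasing, and strictly positive; differentiation combined with $f \leq F$ and $q>1$ gives the pointwise bound $F'(t) = a(t)f(t) + b(t)f(t)^q \leq a(t)F(t) + b(t)F(t)^q$.

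Next I perform the Bernoulli substitution $G(t) := F(t)^{1-q}$, for which $G'(t) = (1-q) F(t)^{-q} F'(t)$. Dividing the preceding inequality by $F^q>0$ and multiplying by the negative factor $1-q$ reverses the direction and produces the linear differential inequality
\begin{align*}
G'(t) \geq (1-q)\bigl[a(t) G(t) + b(t)\bigr].
\end{align*}
Multiplying through by the integrating factor $\mu(t) := \exp\bigl(-(1-q)\int_{t_0}^t a(s)\,ds\bigr)$ rewrites this as $(\mu G)'(t) \geq (1-q) b(t) \mu(t)$, and integration on $[t_0,t]$ yields
\begin{align*}
G(t) \geq c^{1-q}\,e^{(1-q)\int_{t_0}^t a(s)\,ds} + (1-q)\int_{t_0}^t b(s)\, e^{(1-q)\int_s^t a(r)\,dr}\,ds.
\end{align*}

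To finish, I factor $c^{1-q}$ out of the right-hand side of this lower bound on $G$ to bring it into the bracket form appearing in the statement (using $(1-q)=-(q-1)$), and then invert the substitution by raising both sides to the power $1/(1-q)$. Since $1-q<0$, this reverses the inequality once more and delivers the advertised upper bound on $f(t) \leq F(t)$. The smallness hypothesis on $c$ is designed precisely so that the bracket remains strictly positive on $[t_0, t_0+h]$, legitimizing the fractional power; this positivity follows by estimating $e^{(1-q)\int_s^t a}$ uniformly in $s \in [t_0,t]$ via $\int_s^t a \leq \int_{t_0}^{t_0+h} a$ together with the negativity of $1-q$, which reduces to exactly the stated sufficient condition on $c$. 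The main obstacle is nothing deep, only the meticulous sign bookkeeping: each operation (multiplying by $1-q<0$, dividing by $F^q>0$, raising to the $1/(1-q)$ power) preserves or reverses inequalities depending on signs, and one must verify $F>0$ and bracket positivity at each stage; both are ensured uniformly by the hypothesis.
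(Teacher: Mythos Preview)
The paper does not prove this lemma: it is stated in the Appendix as a classical result (``We include the formulation of the classical Gronwall's lemma with its less popular non-linear varieties'') with no argument supplied. So there is nothing in the paper to compare your proof against.

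Your approach---introducing the majorant $F$, reducing to a Bernoulli inequality via $G=F^{1-q}$, solving with the integrating factor, and inverting---is the standard and correct one. Two minor remarks. First, when you factor $c^{1-q}$ from the lower bound on $G$ you obtain $c^{1-q}\bigl[e^{(1-q)\int_{t_0}^{t}a}-c^{\,q-1}(q-1)\int_{t_0}^{t}b\,e^{(1-q)\int_s^t a}\,ds\bigr]$, and raising to the power $1/(1-q)$ gives an outer exponent $1/(1-q)$ and an inner factor $c^{\,q-1}$; the paper's displayed bound has $c^{-1}$ inside and exponent $1/(q-1)$, which appear to be typos in the statement (the smallness condition on $c$ that the paper writes is, by contrast, consistent with your formula). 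Second, in your positivity check the estimate you cite, $\int_s^t a\le \int_{t_0}^{t_0+h}a$ combined with $1-q<0$, yields a \emph{lower} bound $e^{(1-q)\int_s^t a}\ge e^{(1-q)\int_{t_0}^{t_0+h}a}$, which goes the wrong way for bounding the subtracted integral from above; the correct (and simpler) bound is $e^{(1-q)\int_s^t a}\le 1$, together with $e^{(1-q)\int_{t_0}^{t}a}\ge e^{(1-q)\int_{t_0}^{t_0+h}a}$ for the first term, which then reduces exactly to the stated sufficient condition on $c$.
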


\bibliographystyle{abbrv}
\bibliography{kupmpp}
\end{document}